\def\N{\mathbb{N}}
\def\R{\mathbb{R}}
\def\K{\mathbb{K}}
\def\F{\mathcal{F}}
\def\L{\mathcal{L}}
\def\lin{\mathrm{lin}}
\def\rank{\mathrm{rank}}
\def\diam{\mathrm{diam}}
\def\Lip{\mathrm{Lip}}
\def\Lipo{\mathrm{Lip}_0}
\def\LipoF{\mathrm{Lip}_{0F}}
\begin{document}
\title[The approximation property for spaces of Lipschitz functions]{The approximation property for spaces of Lipschitz functions}

\author{A. Jim\'{e}nez-Vargas}
\address{Departamento de Matem\'{a}ticas, Universidad de Almer\'{i}a, 04120 Almer\'{i}a, Spain}
\email{ajimenez@ual.es}
\thanks{This research was partially supported by Junta de Andaluc\'{i}a grant FQM-194.}

\subjclass[2010]{46A70, 46B28}

\keywords{Lipschitz spaces; approximation property; tensor product; epsilon product}

\begin{abstract}
Let $\Lipo(X)$ be the space of all Lipschitz scalar-valued functions on a pointed metric space $X$. We characterize the approximation property for $\Lipo(X)$ with the bounded weak* topology using as tools the tensor product, the $\epsilon$-product and the linearization of Lipschitz mappings. 
\end{abstract}
\maketitle

\section*{Introduction}

Let $(X,d)$ be a pointed metric space with a base point which we always will denote by $0$ and let $F$ be a Banach space. The space $\Lipo(X,F)$ is the Banach space of all Lipschitz mappings $f$ from $X$ to $F$ that vanish at $0$, with the Lipschitz norm defined by 
$$
\Lip(f)=\sup\left\{\frac{\left\|f(x)-f(y)\right\|}{d(x,y)}\colon x,y\in X, \; x\neq y\right\}.
$$ 
The elements of $\Lipo(X,F)$ are frequently called Lipschitz operators. If $\K$ is the field of real or complex numbers, $\Lipo(X,\mathbb{K})$ is denoted by $\Lipo(X)$. 
The closed linear subspace of the dual of $\Lipo(X)$ spanned by the functionals $\delta_x$ on $\Lipo(X)$ with $x\in X$, given by $\delta_x(f)=f(x)$, is a predual of $\Lipo(X)$. 
This predual is called the Lipschitz-free space over $X$ and denoted by $\F(X)$ in \cite{gk}. We refer the reader to Weaver's book \cite{weaver} for the basic theory of $\Lipo(X)$ and its predual $\F(X)$, which is called the Arens--Eells space of $X$ and denoted by $\AE(X)$ there.

The study of the approximation property is a topic of interest for many researchers. Let us recall that a Banach space $E$ has the approximation property (in short, (AP)) if for each compact set $K\subset E$ and each $\varepsilon>0$, there exists a bounded finite-rank linear operator $T\colon E\to E$ such that $\sup_{x\in K}\left\|T(x)-x\right\|<\varepsilon$. If $\left\|T\right\|\leq\lambda$ for some $\lambda\geq 1$, it is said that $E$ has the $\lambda$-bounded approximation property (in short, $\lambda$-(BAP)).


To our knowledge, little is known about the (AP) for $\Lipo(X)$. Johnson \cite{j75} observed that if $X$ is the closed unit ball of Enflo's space \cite{e}, then $\Lipo(X)$ fails the (AP). 
Godefroy and Ozawa \cite{go} showed that there exists a compact pointed metric space $X$ such that $\F(X)$ fails the (AP) and hence so does $\Lipo(X)$. For positive results, $\Lip[0,1]$ is isomorphic to $L^\infty[0,1]$ (see \cite[p. 224]{h}) and thus $\Lip[0,1]$ has the (AP). If $(X,d)$ is a doubling compact pointed metric space, in particular a compact subset of a finite dimensional Banach space, and $X^{(\alpha)}$ with $\alpha\in (0,1)$ denotes the metric space $(X,d^\alpha)$, then the space $\Lipo(X^{(\alpha)})$ is isomorphic to $\ell_\infty$ by \cite[Theorem 6.5]{k04}, and therefore $\Lipo(X^{(\alpha)})$ has the (AP). 
In \cite{j70} (see also \cite{jsv}), Johnson proved that $\Lipo(X)$ has the (AP) if and only if, for each Banach space $F$, every Lipschitz compact operator from $X$ to $F$ can be approximated in the Lipschitz norm by Lipschitz finite-rank operators. 

The most recent research on the (AP) has been directed toward $\F(X)$ rather than on $\Lipo(X)$. 
Godefroy and Kalton \cite{gk} proved that a Banach space $E$ has the $\lambda$-(BAP) if and only if $\F(E)$ has the same property. Lancien and Perneck\'{a} \cite{lp} showed that $\F(X)$ has the $\lambda$-(BAP) whenever $X$ is a doubling metric space. For stronger approximation properties as the existence of finite-dimensional Schauder decompositions or Schauder bases for certain spaces $\F(E)$, one can see the papers of Borel-Mathurin \cite{b}, Lancien and Perneck\'{a} \cite{lp} and H\'{a}jek and Perneck\'{a} \cite{hp}.
The results in those works provide apparently a limited information about the (AP) for $\Lipo(X)$ since the (AP) of a Banach space follows from the (AP) of its dual space but the converse does not always hold. 

Our aim in this paper is to study the (AP) for the space $\Lipo(X)$, with the bounded weak* topology. In the seminal paper \cite{as}, Aron and Schottenloher initiated the investigation about the (AP) for spaces of holomorphic mappings on Banach spaces. Mujica \cite{m} extended this study to the preduals of such spaces. Their techniques, based on the tensor product, the $\epsilon$-product and the linearization of holomorphic mappings, work just as well for spaces of Lipschitz mappings. 

We now describe the contents of this paper. In Section \ref{1}, we briefly recall the compact-open topology $\tau_0$, the approximation property, the $\epsilon$-product and the linearization of Lipschitz mappings. 

We address the study of the topology of bounded compact convergence $\tau_\gamma$ on $\Lipo(X)$ in Section \ref{2}. 
In the terminology of Cooper \cite{c}, we prove that $\tau_\gamma$ is the mixed topology $\gamma[\Lip,\tau_0]$ and $(\Lipo(X),\tau_\gamma)$ is a Saks space. Furthermore, it is shown that $\tau_\gamma$ agrees with the bounded weak* topology $\tau_{bw^*}$. 

We give a pair of descriptions of $\tau_\gamma$ by means of seminorms in Section \ref{3}. Assuming $X$ is compact, we first identify $\tau_\gamma$ with the classical strict topology $\beta$ introduced by Buck \cite{bu}. A second, and perhaps more interesting, seminorm description for $\tau_{\gamma}$ motivates the introduction of a new locally convex topology $\gamma\tau_\gamma$ on $\Lipo(X,F)$. 

Section \ref{4} deals with the (AP) for $(\Lipo(X),\tau_\gamma)$. We identify topologically the space $(\Lipo(X,F),\gamma\tau_\gamma)$ with the $\epsilon$-product of $(\Lipo(X),\tau_\gamma)$ and $F$, and this permits us to prove that the following properties are equivalent:
\begin{enumerate}
	\item $(\Lipo(X),\tau_\gamma)$ has the (AP).
	\item Every Lipschitz operator from $X$ into $F$ can be approximated by Lipschitz finite-rank operators within the topology $\gamma\tau_\gamma$ for all Banach spaces $F$.
	\item $\F(X)$ has the (AP).
\end{enumerate}

In Section \ref{5}, we establish a representation of the dual space of $(\Lipo(X,F),\gamma\tau_\gamma)$.

\section{Preliminaries}\label{1}

\subsection*{Topologies on spaces of Lipschitz functions}

Let $X$ be a pointed metric space and let $E$ be a Banach space. The compact-open topology or topology of compact convergence on $\Lipo(X,E)$ is the locally convex topology generated by the seminorms of the form 
$$
\left|f\right|_K=\sup_{x\in K}\left\|f(x)\right\|,\qquad f\in\Lipo(X,E),
$$
where $K$ varies over the family of all compact subsets of $X$. We denote by $\tau_0$ the compact-open topology on $\Lipo(X,E)$, or on any vector subspace of $\Lipo(X,E)$. 

The topology of pointwise convergence on $\Lipo(X,E)$ is the locally convex topology $\tau_p$ generated by the seminorms of the form 
$$
\left|f\right|_F=\sup_{x\in F}\left\|f(x)\right\|,\qquad f\in\Lipo(X,E),
$$
where $F$ ranges over the family of all finite subsets of $X$. 

Finally, we denote by $\tau_{\Lip}$ the topology on $\Lipo(X,E)$ generated by the Lipschitz norm $\Lip$. It is clear that $\tau_p\subset\tau_0$, and the inclusion $\tau_0\subset\tau_{\Lip}$ follows easily since $\left|f\right|_K\leq\Lip(f)\diam(K\cup\{0\})$ for all $f\in\Lipo(X)$ and each compact set $K\subset X$. 

\subsection*{Approximation property and $\epsilon$-product}

Let $E$ and $F$ be locally convex Hausdorff spaces. Let $\L(E;F)$ denote the vector space of all continuous linear mappings from $E$ into $F$, let $\L_b(E;F)$ denote the vector space $\L(E;F)$ with the topology of uniform convergence on the bounded subsets of $E$ and let $\L_c(E;F)$ denote the vector space $\L(E;F)$ with the topology of uniform convergence on the convex balanced compact subsets of $E$. That last topology coincides with the compact-open topology if the closed convex hull of each compact subset of $E$ is compact (for example, if $E$ is quasi-complete). When $F=\K$, we write $E'$ instead of $\L(E;\K$), $E'_b$ in place of $\L_b(E;\K)$, and $E'_c$ instead of $\L_c(E;\K)$. Unless stated otherwise, if $E$ and $F$ are normed spaces, $\L(E;F)$ is endowed with its natural norm topology. Let $E\otimes F$ denote the tensor product of $E$ and $F$, and $E'\otimes F$ can be identified with the subspace of all finite-rank mappings in $\L(E;F)$.

A locally convex space $E$ is said to have the approximation property (in short, (AP)) if the identity mapping on $E$ lies in the closure of $E'\otimes E$ in $\L_c(E;E)$. This is Schwartz's definition of the (AP) in \cite{s1}, which is slightly different from Grothendieck's definition in \cite{g}, though both definitions coincide for quasi-complete locally convex spaces. 

The $\epsilon$-product of $E$ and $F$, denoted by $E\epsilon F$ and introduced by Schwartz \cite{s1,s}, is the space $\L_\epsilon(F'_c;E)$, that is the vector space $\L_\epsilon(F'_c;E)$, with the topology of uniform convergence on the equicontinuous subsets of $F'$. Notice that if $F$ is a normed space, then equicontinuous sets and norm bounded sets in $F'$ coincide. The topology on $\L_\epsilon(F'_c;E)$ is generated by the seminorms 
$$
\alpha\epsilon\beta(T)
=\sup\left\{\left|\left\langle T(\mu),\nu\right\rangle\right|\colon \mu\in F', \; |\mu|\leq\alpha, \; \nu\in E', \; |\nu|\leq\beta\right\}, 
\qquad T\in\L_\epsilon(F'_c;E),
$$
where $\alpha$ ranges over the continuous seminorms on $F$ and $\beta$ over the continuous seminorms on $E$. 

We will use the subsequent results which follow from results of Grothendieck \cite{g}, Schwartz \cite{s1} and Bierstedt and Meise \cite{bm}. 

\begin{proposition}\cite{s1}\label{r0}
Let $E$ and $F$ be locally convex spaces. Then the transpose mapping $T\mapsto T^t$ from $E\epsilon F$ to $F\epsilon E$ is a topological isomorphism.
\end{proposition}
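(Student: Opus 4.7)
The plan is to define the transpose $T^t$ explicitly, verify in turn that it lies in $F\epsilon E$, that $T\mapsto T^t$ is an involution, and that the defining families of seminorms on the two $\epsilon$-products match exactly. For $T\in E\epsilon F=\L(F'_c;E)$, I would set
$$
\mu(T^t(\nu)) = \nu(T(\mu)), \qquad \mu\in F',\ \nu\in E'.
$$
For each fixed $\nu\in E'$, the map $\mu\mapsto\nu(T(\mu))$ is the composition $\nu\circ T$ of continuous linear maps, hence lies in $(F'_c)'$. The key identification $(F'_c)'=F$ then forces $\nu\circ T$ to be evaluation at a unique element of $F$, which we define to be $T^t(\nu)$.

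The first point that deserves care is precisely this identification: the topology $\tau_c$ on $F'$ of uniform convergence on convex balanced compact subsets of $F$ sits between $\sigma(F',F)$ and the Mackey topology $\tau(F',F)$, since finite sets are compact and compact sets are $\sigma(F,F')$-compact. The Mackey--Arens theorem then gives $(F'_c)'=F$, and this is how $T^t$ lands in $F$ rather than merely in $F''$.

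Next, to check that $T^t\in\L(E'_c;F)$, I would fix a continuous seminorm $\alpha$ on $F$ and compute
$$
\alpha(T^t(\nu)) = \sup\{|\mu(T^t(\nu))|:|\mu|\leq\alpha\} = \sup\{|\nu(T(\mu))|:|\mu|\leq\alpha\}.
$$
The polar $U_\alpha=\{\mu\in F':|\mu|\leq\alpha\}$ is equicontinuous, hence $\sigma(F',F)$-compact by Alaoglu; and since all polar topologies agree on equicontinuous sets, $U_\alpha$ is also compact in $F'_c$. Therefore $T(U_\alpha)$ is a convex balanced compact subset of $E$, and the right-hand side above is the $\tau_c$-continuous seminorm on $E'$ associated to $T(U_\alpha)$, which proves continuity.

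Finally, the defining identity is manifestly symmetric in $(T,T^t)$, so $(T^t)^t=T$ under the natural identifications, giving bijectivity. For the topology, the seminorm $\alpha\epsilon\beta$ on $E\epsilon F$ and the seminorm $\beta\epsilon\alpha$ on $F\epsilon E$ both equal
$$
\sup\{|\langle T(\mu),\nu\rangle|:|\mu|\leq\alpha,\ |\nu|\leq\beta\},
$$
so $T\mapsto T^t$ is literally an isometry between each corresponding pair of defining seminorms, yielding bicontinuity. I expect the main obstacle to be the opening step: verifying via Mackey--Arens together with the Alaoglu/equicontinuity argument that $T^t$ genuinely takes values in $F$ and is continuous from $E'_c$. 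Once this duality input is in hand, the involution and the matching of seminorms are automatic from the symmetry of the defining relation.
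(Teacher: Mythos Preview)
Your argument is correct and is the standard proof of this fact. Note, however, that the paper does not supply its own proof of this proposition: it is quoted from Schwartz \cite{s1}, and the paper simply remarks that ``detailed proofs of the preceding results can be found in the paper \cite{dm} by Dineen and Mujica.'' So there is no in-paper proof to compare against; what you have written is essentially the classical argument one finds in those references. The one step worth stating a touch more explicitly is why $U_\alpha$ is compact in $F'_c$: you invoke that ``all polar topologies agree on equicontinuous sets,'' which is exactly the right fact, but it may help to name it (on equicontinuous subsets of $F'$ the topologies $\sigma(F',F)$ and $\tau_c$ coincide), since it is the hinge on which the continuity of $T^t$ turns.
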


\begin{theorem}\cite{bm,g,s1}\label{r00}
A locally convex space $E$ has the (AP) if and only if $E\otimes F$ is dense in $E\epsilon F$ for every Banach space $F$.
\end{theorem}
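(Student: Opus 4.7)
The plan is to prove both directions via the canonical injection
$\iota\colon E\otimes F \hookrightarrow E\epsilon F$, $e\otimes f \mapsto (\mu \mapsto \mu(f)\,e)$, which is well defined because the compact-open topology on $F'$ belongs to the dual pair $\langle F,F'\rangle$ (it lies between the weak$^*$ and Mackey topologies), and hence by Mackey--Arens $(F'_c)' = F$.

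For the direct implication, assume $E$ has the (AP), fix a Banach space $F$ and an element $T\in E\epsilon F$. Pick a net $(S_\alpha)\subset E'\otimes E$ with $S_\alpha\to \mathrm{id}_E$ in $\L_c(E;E)$ and set $T_\alpha := S_\alpha\circ T\in \L(F'_c;E)$. Writing $S_\alpha = \sum_i \phi_i^\alpha\otimes e_i^\alpha$, the map $\mu\mapsto \phi_i^\alpha(T(\mu))$ is continuous linear on $F'_c$ and hence, by Mackey--Arens, equals evaluation at some $f_i^\alpha\in F$; therefore $T_\alpha = \sum_i e_i^\alpha\otimes f_i^\alpha$ lies in $E\otimes F$. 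The key observation for the convergence $T_\alpha\to T$ in the $\epsilon$-topology is that $T$ sends equicontinuous subsets of $F'$ to relatively compact subsets of $E$: if $B\subset F'$ is equicontinuous, then $B$ is weak$^*$-compact by Banach--Alaoglu, the weak$^*$ topology agrees with that of $F'_c$ on equicontinuous sets, so $B$ is $F'_c$-compact and $T(B)$ is relatively compact in $E$ by continuity. The AP then gives $S_\alpha\to\mathrm{id}_E$ uniformly on the closed convex balanced hull of $\overline{T(B)}$, so $\sup_{\mu\in B} p(T_\alpha(\mu) - T(\mu))\to 0$ for every continuous seminorm $p$ on $E$, which is the required convergence.

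For the converse, assume $E\otimes F$ is dense in $E\epsilon F$ for every Banach space $F$, and fix a convex balanced compact $K\subset E$ and a continuous seminorm $p$ on $E$. The strategy is to choose $F$ and a canonical $T\in E\epsilon F$ so that finite-rank approximations of $T$ pull back to finite-rank approximations of $\mathrm{id}_E$ on $K$. I would take $F = E_K'$, where $E_K$ is the Banach space with unit ball $K$ (complete since $K$ is compact), and consider the second transpose $j^{**}$ of the compact inclusion $j\colon E_K\hookrightarrow E$: since $j$ is weakly compact, $j^{**}$ lands in $E$, and a direct check shows that $j^{**}\in E\epsilon E_K' = \L((E_K')'_c;E)$. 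By hypothesis, approximate $j^{**}$ by $\sum_i e_i^\alpha\otimes\psi_i^\alpha \in E\otimes E_K'$; restricting to $K\subset E_K\subset (E_K')'$ yields $\sup_{x\in K} p(\sum_i \psi_i^\alpha(x) e_i^\alpha - x)\to 0$. The main obstacle is that the $\psi_i^\alpha$ lie in $E_K'$ rather than in $E'$, and this is resolved by a polar duality argument as in \cite{g,s1}: the $\epsilon$-topology on $E\epsilon F$ is defined using equicontinuous subsets of $F'$, and by suitably refining the approximation one can re-select the functionals in $E'$ while preserving the required estimate on $K$, producing the element of $E'\otimes E$ that witnesses the (AP).
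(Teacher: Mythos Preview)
The paper does not give its own proof of this theorem: it is stated as a classical result due to Grothendieck, Schwartz, and Bierstedt--Meise, with the remark that detailed proofs can be found in Dineen--Mujica \cite{dm}. So there is no in-paper argument to compare against, only the literature citation.

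That said, let me comment on your attempt. The forward direction is correct and is exactly the standard argument: compose $T$ on the left with finite-rank approximants $S_\alpha$ of $\mathrm{id}_E$, identify each $\phi_i^\alpha\circ T\in (F'_c)'$ with an element of $F$ via Mackey--Arens, and use that $T$ carries equicontinuous (hence, for Banach $F$, norm-bounded) sets in $F'$ to relatively compact sets in $E$ so that the convergence $S_\alpha\to\mathrm{id}_E$ in $\L_c(E;E)$ upgrades to $T_\alpha\to T$ in the $\epsilon$-topology.

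Your converse, however, is incomplete. The idea of using $E_K$ (the linear span of $K$ normed by the gauge of $K$) is the right one and is indeed what appears in the classical references, and it is true that $E_K$ is a Banach space when $K$ is absolutely convex and compact. But the construction you describe has a real gap at the end: you obtain approximants $\sum_i e_i^\alpha\otimes\psi_i^\alpha$ with $\psi_i^\alpha\in E_K'$, and you need elements of $E'\otimes E$ to witness the (AP). You acknowledge this and defer to ``a polar duality argument as in \cite{g,s1}'', but you do not carry it out, and this is precisely the nontrivial step. The usual route at this point is not to ``re-select'' the $\psi_i^\alpha$, but rather to observe that continuous linear functionals on $E\epsilon F$ can be represented by elements of $E'_c\otimes F'$ (via the trace-duality), and then a separation argument reduces the (AP) to the density hypothesis; alternatively, one works directly with the transpose $j'\colon E'\to E_K'$ and uses that the seminorm on $\L_c(E;E)$ associated to $K$ factors through $E_K$. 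Either way, some concrete work remains, and your sketch does not supply it.
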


\begin{proposition}\cite{bm,g,s1}\label{r000}
A locally convex space $E$ has the (AP) if $E'_c$ has the (AP).
\end{proposition}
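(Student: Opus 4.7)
The plan is to exploit the tensor-product characterization of the (AP) from Theorem \ref{r00} together with the transpose symmetry from Proposition \ref{r0}, in order to reduce the problem to an approximation statement involving continuous linear maps out of $E'_c$; the hypothesis that $E'_c$ has the (AP) will then manufacture the required finite-rank approximants by composition.

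First, by Theorem \ref{r00} it suffices to prove that $E\otimes F$ is dense in $E\epsilon F$ for every Banach space $F$. Using the transpose isomorphism $E\epsilon F\cong F\epsilon E=\L_\epsilon(E'_c;F)$ from Proposition \ref{r0}, this reduces to showing that every $u\in\L(E'_c;F)$ can be approximated, uniformly on equicontinuous subsets of $E'$, by elements of $F\otimes E$ regarded in $F\epsilon E$ as finite-rank operators $E'_c\to F$.

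Given such a $u$, I would appeal to the (AP) of $E'_c$ to choose a net of finite-rank operators $S_\alpha$ on $E'_c$ with $S_\alpha\to\mathrm{id}_{E'_c}$ in $\L_c(E'_c;E'_c)$. Since $(E'_c)'=E$ by the Mackey--Arens theorem, each $S_\alpha$ has the form $\sum_i x_{i,\alpha}\otimes\phi_{i,\alpha}$ with $x_{i,\alpha}\in E$ and $\phi_{i,\alpha}\in E'$, acting as $\psi\mapsto\sum_i \psi(x_{i,\alpha})\phi_{i,\alpha}$. The candidate approximants are then $u_\alpha:=u\circ S_\alpha=\sum_i x_{i,\alpha}\otimes u(\phi_{i,\alpha})$, which lie manifestly in $E\otimes F$.

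The step I expect to be the main obstacle is upgrading the convergence $S_\alpha\to\mathrm{id}_{E'_c}$, which by hypothesis is uniform on convex balanced compact subsets of $E'_c$, to the convergence $u_\alpha\to u$ uniformly on equicontinuous subsets $A\subset E'$ demanded by the $\epsilon$-product topology. Continuity of $u\colon E'_c\to F$ supplies a continuous seminorm $q$ on $E'_c$ and a constant $C>0$ with $\|u(\psi)\|_F\leq Cq(\psi)$, so the task reduces to controlling $q(S_\alpha\phi-\phi)$ uniformly in $\phi\in A$. This will be bridged by the classical fact that every equicontinuous subset of $E'$ is relatively compact in $E'_c$---on equicontinuous sets the weak-$\ast$ and compact-open topologies coincide---so $A$ lies in some compact convex balanced subset of $E'_c$ on which the hypothesized convergence is uniform, closing the argument.
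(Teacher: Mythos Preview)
The paper does not give its own proof of Proposition~\ref{r000}; it is quoted from \cite{bm,g,s1}, with the reader referred to \cite{dm} for details. So there is no in-paper argument to compare against. Your outline is essentially the standard proof (and the one in \cite{dm}): reduce via Theorem~\ref{r00} and Proposition~\ref{r0} to approximating each $u\in\L(E'_c;F)$ in the $\epsilon$-topology, take finite-rank approximants $S_\alpha$ of $\mathrm{id}_{E'_c}$ supplied by the (AP) of $E'_c$, use $(E'_c)'=E$ to see $u\circ S_\alpha\in E\otimes F$, and pass to the limit using that equicontinuous sets in $E'$ sit inside polars $U^\circ$ that are compact in $E'_c$.

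Two small points worth tightening. First, ``by Mackey--Arens'' for $(E'_c)'=E$ is correct but a word is due: the topology $\tau_c$ on $E'$ lies between $\sigma(E',E)$ and the Mackey topology $\tau(E',E)$ (absolutely convex compact sets in $E$ are $\sigma(E,E')$-compact), so Mackey--Arens indeed gives the dual back as $E$. Second, the (AP) hypothesis yields uniform convergence of $S_\alpha$ on \emph{convex balanced} compact subsets of $E'_c$, not merely on relatively compact ones; your last paragraph should therefore note that the equicontinuous set $A$ is contained in some $U^\circ$, which is already convex, balanced, and (since $\sigma(E',E)=\tau_c$ on equicontinuous sets, by Alaoglu--Bourbaki plus Ascoli) compact in $E'_c$. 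With these clarifications your sketch is a complete and correct proof.
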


Detailed proofs of the preceding results can be found in the paper \cite{dm} by Dineen and Mujica.

\subsection*{Linearization of Lipschitz mappings}

The study of the preduals of $\Lipo(X)$ was approached by Weaver \cite{weaver} by using a procedure to linearize Lipschitz mappings. A similar process of linearization was presented by Mujica for bounded holomorphic mappings on Banach spaces in \cite{m}. 

\begin{theorem}\label{mainth}\cite{weaver}
Let $X$ be a pointed metric space. Then there exist a unique, up to an isometric isomorphism, Banach space $\F(X)$ and an isometric embedding $\delta_X\colon X\to\F(X)$ such that 
\begin{enumerate}
\item $\F(X)$ is the closed linear hull in $\Lipo(X)'$ of the evaluation functionals $\delta_x\colon\Lipo(X)\to\mathbb{K}$ with $x\in X$, where $\delta_x(g)=g(x)$ for all $g\in\Lipo(X)$.
\item The Dirac map $\delta_X\colon X\to\F(X)$ is the map given by $\delta_X(x)=\delta_x$.
\item For each Banach space $E$ and each $f\in\Lipo(X,E)$, there is a unique operator $T_f\in\L(\F(X);E)$ such that $T_f\circ\delta_X=f$. Furthermore, $\left\|T_f\right\|=\Lip(f)$.
\item The evaluation map $f\mapsto T_f$ from $\Lipo(X,E)$ to $\L(\F(X);E)$, defined by $T_f(\varphi)=\varphi(f)$, is an isometric isomorphism.
\item $\Lipo(X)$ is isometrically isomorphic to $\F(X)'$ via the evaluation map.
\item $\F(X)$ coincides with the space of all linear functionals $\varphi$ on $\Lipo(X)$ such that the restriction of $\varphi$ to the closed unit ball $B_{\Lipo(X)}$ of $\Lipo(X)$ is continuous when $B_{\Lipo(X)}$ is equipped with the topology of pointwise convergence $\tau_p$, and hence with the compact-open topology $\tau_0$.
\end{enumerate}
\end{theorem}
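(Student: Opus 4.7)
The plan is to construct $\F(X)$ explicitly as the closed linear hull in $\Lipo(X)'$ of the evaluation functionals and then verify items (i)--(vi) in that order; the uniqueness clause will then be a formal consequence of the universal property in (iii) and (iv). With this as the definition, items (i) and (ii) are tautologies, and to see that $\delta_X$ is an isometric embedding I fix $x,y\in X$ and compute
\[
\|\delta_x-\delta_y\|_{\F(X)}=\sup\bigl\{|f(x)-f(y)|\colon f\in\Lipo(X),\ \Lip(f)\le 1\bigr\}.
\]
The bound $\le d(x,y)$ is immediate from the definition of $\Lip$; the reverse inequality is witnessed by the test function $f(z)=d(z,x)-d(0,x)$, which lies in the closed unit ball of $\Lipo(X)$ and attains $|f(x)-f(y)|=d(x,y)$.

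For (iii)--(v), given $f\in\Lipo(X,E)$ I define $T_f$ on $\lin\{\delta_x\colon x\in X\}$ by $T_f(\sum_i a_i\delta_{x_i})=\sum_i a_i f(x_i)$. Well-definedness is the first subtle step: if $\sum_i a_i\delta_{x_i}=0$ in $\Lipo(X)'$, I must conclude $\sum_i a_i f(x_i)=0$ in $E$. I argue via Hahn--Banach, testing against arbitrary $\phi\in E'$ and noting that $\phi\circ f\in\Lipo(X)$ with $\Lip(\phi\circ f)\le\|\phi\|\,\Lip(f)$; the computation $\phi(T_f(\gamma))=\langle\gamma,\phi\circ f\rangle$ then simultaneously yields well-definedness and the norm estimate $\|T_f(\gamma)\|\le\Lip(f)\,\|\gamma\|$ on the dense span, so $T_f$ extends continuously to $\F(X)$. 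The reverse bound $\|T_f\|\ge\Lip(f)$ follows by evaluating at $\delta_x-\delta_y$. Uniqueness of $T_f$ is forced by density, and surjectivity of the evaluation map $f\mapsto T_f$ is immediate since any $T\in\L(\F(X);E)$ reconstructs $f=T\circ\delta_X\in\Lipo(X,E)$; specializing to $E=\K$ gives (v).

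For (vi), the forward direction is easy: each $\delta_x$ is $\tau_p$-continuous on $\Lipo(X)$, and uniform (norm) limits of $\tau_p$-continuous functions on the bounded set $B_{\Lipo(X)}$ remain $\tau_p$-continuous. For the converse I exploit the identification $\Lipo(X)=\F(X)'$ from (v): the weak-$*$ topology on the dual ball $B_{\Lipo(X)}$ coincides with $\tau_p$ there, since $\F(X)$ is spanned by the $\delta_x$, and the Banach--Dieudonn\'e / Krein--\v{S}mulian theorem identifies $\F(X)$ with the linear functionals on $\Lipo(X)$ whose restriction to $B_{\Lipo(X)}$ is weak-$*$ continuous. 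Equivalence with $\tau_0$-continuity on $B_{\Lipo(X)}$ is automatic because $\tau_p$ and $\tau_0$ agree on the equicontinuous ball via Arzel\`a--Ascoli. The main obstacles I anticipate are precisely the Hahn--Banach well-definedness argument for $T_f$ and the appeal to Banach--Dieudonn\'e for the hard direction of (vi); the remaining steps are formal manipulations with the universal property.
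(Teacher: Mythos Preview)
Your proof is correct and self-contained. The paper, by contrast, does not prove this theorem at all: it is presented as a preliminary result, with parts (i)--(v) attributed to Weaver's book \cite[Theorem 2.2.4]{weaver} and part (vi) to \cite[Lemma 1.1]{jsv} together with the fact (from \cite[p.~232]{k}) that $\tau_p$ and $\tau_0$ agree on equicontinuous sets.

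Your argument supplies exactly what those references contain. The isometry of $\delta_X$ via the test function $z\mapsto d(z,x)-d(0,x)$, the Hahn--Banach trick $\phi(T_f(\gamma))=\langle\gamma,\phi\circ f\rangle$ for well-definedness and the norm bound, and the Krein--\v{S}mulian identification for the hard direction of (vi) are the standard proofs. One small point worth making explicit in your forward direction of (vi): the reason norm limits preserve $\tau_p$-continuity on $B_{\Lipo(X)}$ is that norm convergence in $\Lipo(X)'$ is precisely uniform convergence on $B_{\Lipo(X)}$, so you are taking a uniform limit of continuous functions on a topological space. Also, your claim that $\tau_p=\tau_{w^*}$ on $B_{\Lipo(X)}$ deserves one line of justification (density of $\lin\{\delta_x\}$ in $\F(X)$ plus boundedness), which the paper itself spells out just before Theorem~\ref{tmf}. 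Otherwise there is nothing to compare: you have proved what the paper merely quotes.
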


The statements (i)--(v) of Theorem \ref{mainth} were proved by Weaver (see \cite[Theorem 2.2.4]{weaver}). The statement (vi) was stated in \cite[Lemma 1.1]{jsv} for $\tau_p$ and recall that, by \cite[p. 232]{k}, the topology $\tau_p$ agrees with $\tau_0$ on the equicontinuous subsets of $\Lipo(X)$, and in particular on  $B_{\Lipo(X)}$.

Viewing $\Lipo(X)$ as the dual of $\F(X)$, we can consider its weak* topology. We recall that the weak* topology on $\Lipo(X)$ is the locally convex topology $\tau_{w^*}$ generated by the seminorms of the form 
$$
p_G(f)=\sup_{\varphi\in G}\left|\varphi(f)\right|,\qquad f\in\Lipo(X),
$$
where $G$ ranges over the family of all finite subsets of $\F(X)$. 
Let us recall that $\tau_{w^*}$ is the smallest topology for $\Lipo(X)$ such that, for each $\varphi\in\F(X)$, the linear functional $f\mapsto\varphi(f)$ on $\Lipo(X)$ is continuous with respect to $\tau_{w^*}$. 

It is easy to check that $\tau_p\subset\tau_{w^*}\subset\tau_{\Lip}$. Indeed, on a hand, if $F$ is a finite subset of $X$, then $G=\delta_X(F)$ is a finite subset of $\F(X)$ and 
$$
\left|f\right|_F=\sup_{x\in F}\left|f(x)\right|=\sup_{x\in F}\left|\delta_x(f)\right|=\sup_{\varphi\in G}\left|\varphi(f)\right|=p_G(f)
$$
for all $f\in\Lipo(X)$, and this proves that $\tau_p\subset\tau_{w^*}$. On the other hand, if $G$ is a finite subset of $\F(X)$, then $G$ is a norm bounded subset of $\Lipo(X)'$ and 
$p_G(f)\leq\sup_{\varphi\in G}\left\|\varphi\right\|\Lip(f)$ for all $f\in\Lipo(X)$, and this shows that $\tau_{w^*}\subset\tau_{\Lip}$. 

The ensuing result was proved by Godefroy and Kalton in \cite{gk}.

\begin{theorem}\label{mainth2}\cite{gk}
Let $E$ and $F$ be Banach spaces.
\begin{enumerate}
\item For every mapping $f\in\Lipo(E,F)$, there exists a unique operator $\widehat{f}\in\L(\F(E);\F(F))$ such that $\widehat{f}\circ\delta_E=\delta_F\circ f$. Furthermore, $||\widehat{f}||=\Lip(f)$.
\item If $E$ is a subspace of $F$ and $\iota\colon E\to F$ is the canonical embedding, then $\widehat{i}\colon\F(E)\to\F(F)$ is an isometric embedding.
\end{enumerate}
\end{theorem}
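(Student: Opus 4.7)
For part (i), the plan is to simply apply the universal property of the Lipschitz-free space, already provided in Theorem~\ref{mainth}(iii). Observe that $\delta_F\circ f\colon E\to\F(F)$ is a Lipschitz map vanishing at $0$, since $f(0)=0$ and $\delta_F(0)=0$. Applying Theorem~\ref{mainth}(iii) to this map produces a unique bounded linear operator $\widehat{f}:=T_{\delta_F\circ f}\in\L(\F(E);\F(F))$ with $\widehat{f}\circ\delta_E=\delta_F\circ f$, and Theorem~\ref{mainth}(iii) also gives $\|\widehat{f}\|=\Lip(\delta_F\circ f)$. Since $\delta_F$ is an isometric embedding, $\Lip(\delta_F\circ f)=\Lip(f)$, which finishes this part. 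There is essentially no obstacle here; the content of (i) is almost entirely a direct invocation of the linearization theorem.

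For part (ii), I would argue by duality, identifying $\F(E)'=\Lipo(E)$ and $\F(F)'=\Lipo(F)$ via Theorem~\ref{mainth}(v). The plan is to show first that the Banach space adjoint $\widehat{\iota}^{\,*}\colon\Lipo(F)\to\Lipo(E)$ coincides, under these identifications, with the restriction map $R(h)=h\circ\iota$. This is a routine verification on the dense set $\delta_E(E)\subset\F(E)$: for $x\in E$ and $h\in\Lipo(F)$,
$$
\langle\widehat{\iota}^{\,*}(h),\delta_E(x)\rangle
=\langle h,\widehat{\iota}(\delta_E(x))\rangle
=\langle h,\delta_F(\iota(x))\rangle
=h(\iota(x))=R(h)(x).
$$

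Next, I would verify that $R$ is a metric quotient, i.e.\ that its closed unit ball maps onto the closed unit ball of $\Lipo(E)$. Given $g\in\Lipo(E)$, the key input is a norm-preserving Lipschitz extension $\widetilde{g}\in\Lipo(F)$ with $\widetilde{g}\circ\iota=g$ and $\Lip(\widetilde{g})=\Lip(g)$. In the real scalar case this is McShane's extension theorem applied coordinatewise (trivially here, since the codomain is $\R$), and in the complex case one proceeds by extending real and imaginary parts and checking the Lipschitz constant is preserved, which is a classical result.

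Having shown $R=\widehat{\iota}^{\,*}$ is a metric quotient, I would finish with the standard duality lemma: a bounded linear operator between Banach spaces is an isometric embedding if and only if its adjoint is a metric quotient. Applied to $\widehat{\iota}$, this yields the desired conclusion. The only nontrivial step is the norm-preserving Lipschitz extension used to establish surjectivity of $R$ with control of the Lipschitz constant; the rest is bookkeeping with the universal property and duality.
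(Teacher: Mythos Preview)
The paper does not prove Theorem~\ref{mainth2}; it is quoted from Godefroy and Kalton \cite{gk}, so there is no in-paper proof to compare against. Your argument is the standard one, and over $\mathbb{R}$ it is essentially how the result is established in \cite{gk}: part (i) is an immediate application of the universal property in Theorem~\ref{mainth}(iii), and part (ii) proceeds by identifying $\widehat{\iota}^{\,*}$ with the restriction map and using McShane's extension theorem to show that this adjoint is a metric quotient, whence $\widehat{\iota}$ is an isometric embedding.

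There is, however, a genuine gap in your treatment of the complex case in (ii). Extending the real and imaginary parts of $g$ separately by McShane produces $\widetilde{g}_1,\widetilde{g}_2$ with $\Lip(\widetilde{g}_j)=\Lip(g_j)\le\Lip(g)$, but for $\widetilde{g}=\widetilde{g}_1+i\widetilde{g}_2$ one only gets the pointwise estimate
\[
|\widetilde{g}(x)-\widetilde{g}(y)|^2\le\bigl(\Lip(g_1)^2+\Lip(g_2)^2\bigr)\,d(x,y)^2,
\]
and since the two suprema need not be attained at the same pair of points this yields only $\Lip(\widetilde{g})\le\sqrt{2}\,\Lip(g)$ in general. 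Indeed $\mathbb{C}\cong(\mathbb{R}^2,\|\cdot\|_2)$ is not hyperconvex, so norm-preserving extension of complex-valued Lipschitz functions from an arbitrary metric subspace is \emph{not} available. Your claim that ``the Lipschitz constant is preserved, which is a classical result'' is therefore incorrect as stated. Godefroy and Kalton work over $\mathbb{R}$; if you wish to cover $\mathbb{K}=\mathbb{C}$ as the present paper does, you must supply a different argument (or a precise reference) rather than coordinatewise McShane extension.
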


\section{The topology of bounded compact convergence for $\Lipo(X)$}\label{2}

We recall (see \cite[Definition 3.2]{c}) that a Saks space is a triple $(E,\left\|\cdot\right\|,\tau)$, where $E$ is a vector space, $\tau$ is a locally convex topology on $E$ and $\left\|\cdot\right\|$ is a norm on $E$ so that the closed unit ball $B_E$ of $(E,\left\|\cdot\right\|)$ is $\tau$-bounded and $\tau$-closed. 

Given a pointed metric space $X$, we consider on $\Lipo(X)$ the following topologies:
\begin{itemize}
	\item $\tau_p$ : the topology of pointwise convergence. 
	\item $\tau_0$ : the topology of compact convergence. 
	\item $\tau_{w^*}$ : the weak* topology $\sigma(\Lipo(X),\F(X))$. 
	\item $\tau_{\Lip}$ : the topology of the norm $\Lip$. 
\end{itemize}
 
The triple $(\Lipo(X),\Lip,\tau_0)$ is a Saks space since 
$B_{\Lipo(X)}$ is $\tau_0$-compact by the Ascoli theorem (see \cite[p. 234]{k}). 
Then, by \cite[3.4]{c}, we can form the mixed topology $\gamma[\Lip,\tau_0]$ on $\Lipo(X)$. Following \cite[Definition 1.4]{c}, $\gamma[\Lip,\tau_0]$ is the locally convex topology on $\Lipo(X)$ generated by the base of neighborhoods of zero $\left\{\gamma(U)\right\}$, where $U=\left\{U_n\right\}$ is a sequence of convex balanced $\tau_0$-neighborhoods of zero and 
$$
\gamma(U):=\bigcup_{n=1}^\infty\left(U_1\cap B_{\Lipo(X)}+U_2\cap 2B_{\Lipo(X)}+U_3\cap 2^2B_{\Lipo(X)}+\cdots+U_n\cap 2^{n-1}B_{\Lipo(X)}\right).
$$ 

Since $\tau_p\subset\tau_{w^*}\subset\tau_0$ on $B_{\Lipo(X)}$ (the second inclusion follows from Theorem \ref{mainth} (vi)) and $B_{\Lipo(X)}$ is $\tau_0$-compact, then $\tau_p=\tau_{w^*}=\tau_0$ on $B_{\Lipo(X)}$. Then \cite[Corollary 1.6]{c} yields   
$$
\gamma[\Lip,\tau_p]=\gamma[\Lip,\tau_{w^*}]=\gamma[\Lip,\tau_0],
$$
and we denote this topology by $\tau_{\gamma}$. We gather next some properties of $\tau_{\gamma}$. 

\begin{theorem}\label{tmf}
Let $X$ be a pointed metric space.
\begin{enumerate}
\item $\tau_0$ is smaller than $\tau_{\gamma}$, and $\tau_{\gamma}$ is smaller than $\tau_{\Lip}$.
\item $\tau_{\gamma}$ is the largest locally convex topology on $\Lipo(X)$ which coincides with $\tau_0$ on each norm bounded subset of $\Lipo(X)$.
\item If $F$ is a locally convex space and $T\colon\Lipo(X)\to F$ is linear, then $T$ is $\tau_{\gamma}$-continuous if and only if $\left.T\right|_B$ is $\tau_0$-continuous for each norm bounded subset $B$ of $\Lipo(X)$.
\item A sequence in $\Lipo(X)$ is $\tau_{\gamma}$-convergent to zero if and only if it is norm bounded and $\tau_0$-convergent to zero.
\item A subset of $\Lipo(X)$ is $\tau_{\gamma}$-bounded if and only if it is norm bounded.
\item A subset of $\Lipo(X)$ is $\tau_{\gamma}$-compact (precompact, relatively compact) if and only if it is norm bounded and $\tau_0$-compact (precompact, relatively compact).
\item $(\Lipo(X),\tau_{\gamma})$ is a complete semi-Montel space.	
\end{enumerate}
\end{theorem}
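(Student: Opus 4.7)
The plan is to deduce the theorem from Cooper's general theory of Saks spaces and mixed topologies, applied to the Saks space $(\Lipo(X),\Lip,\tau_0)$ whose unit ball is already known to be $\tau_0$-compact by the Ascoli theorem. Most of the seven statements are instances of general results in \cite{c}; the one point requiring a direct argument is the equivalence of $\tau_\gamma$-boundedness with norm-boundedness in (v), and (vii) then follows by packaging (v) with the Ascoli compactness of $B_{\Lipo(X)}$.

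For (i), the inclusion $\tau_0\subset\tau_\gamma$ is read off by taking the constant defining sequence $U_n=U$ in the definition of $\gamma(U)$, while $\tau_\gamma\subset\tau_{\Lip}$ is visible because each $\gamma(U)$ contains a norm ball of positive radius (choose $U_n$ absorbing $2^{n-1}B_{\Lipo(X)}$). Parts (ii) and (iii) are Cooper's characterisation of the mixed topology as the finest locally convex topology on $\Lipo(X)$ agreeing with $\tau_0$ on every norm-bounded set, together with the matching continuity criterion for linear maps into an arbitrary locally convex space; I would invoke these directly.

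Part (v) I would prove explicitly. One direction is immediate from $\tau_\gamma\subset\tau_{\Lip}$. For the converse, given a $\tau_\gamma$-bounded set $A$, one exhibits, for any prescribed $M>0$, a sequence $U=\{U_n\}$ of convex balanced $\tau_0$-neighbourhoods of zero with the property that $\gamma(U)\subset\{f:\Lip(f)\le M\}\cdot(\text{scalar})^{-1}$, using that each $U_n\cap 2^{n-1}B_{\Lipo(X)}$ can be made arbitrarily small in norm; since $A$ is absorbed by $\gamma(U)$ this yields a uniform norm bound on $A$. With (v) in hand, (iv) is immediate: a $\tau_\gamma$-null sequence is $\tau_\gamma$-bounded, hence norm-bounded, hence contained in a fixed multiple of $B_{\Lipo(X)}$, on which (ii) gives $\tau_\gamma=\tau_0$. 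Part (vi) is the same packaging: a $\tau_\gamma$-(pre)compact set is $\tau_\gamma$-bounded, hence by (v) norm-bounded, and on each norm-bounded set the topologies $\tau_\gamma$ and $\tau_0$ coincide.

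Part (vii) I expect to be the main obstacle, and I treat it last. For the semi-Montel property, by (v) any $\tau_\gamma$-bounded set is contained in some $nB_{\Lipo(X)}$, which is $\tau_0$-compact by Ascoli and hence, via (vi), $\tau_\gamma$-relatively compact. For completeness, a $\tau_\gamma$-Cauchy net is $\tau_\gamma$-bounded and hence, by (v), norm-bounded, say contained in $MB_{\Lipo(X)}$; on this set $\tau_\gamma$ agrees with $\tau_0$, and $MB_{\Lipo(X)}$ is $\tau_0$-compact (therefore $\tau_0$-complete), so the net $\tau_0$-converges to a point of $MB_{\Lipo(X)}\subset\Lipo(X)$, and the convergence is automatically $\tau_\gamma$. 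The delicate point throughout is the dependence structure — (iv), (vi) and (vii) all rest on (v) — so I would verify the statements in the order (i), (ii), (iii), (v), (iv), (vi), (vii).
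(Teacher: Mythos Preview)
Your overall plan---deduce everything from Cooper's Saks-space theory once the $\tau_0$-compactness of $B_{\Lipo(X)}$ is in hand---is exactly what the paper does: it simply cites the relevant propositions in \cite[Chapter I]{c} for all seven items, including (v) (Proposition 1.11) and (vii) (Propositions 1.13 and 1.26). Where your proposal diverges is in giving explicit arguments for (v) and for the completeness half of (vii), and both sketches contain genuine gaps.

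For (v), the claim that ``each $U_n\cap 2^{n-1}B_{\Lipo(X)}$ can be made arbitrarily small in norm'' is false. Any convex balanced $\tau_0$-neighbourhood of $0$ is of the form $\{f:|f|_K\le\varepsilon\}$, and this set meets $2^{n-1}B_{\Lipo(X)}$ in functions of Lipschitz constant arbitrarily close to $2^{n-1}$: take small-amplitude rapidly oscillating functions when $K=X$ is compact, or functions supported away from $K$ otherwise. Hence no $\gamma(U)$ is norm-bounded, and your absorption argument cannot produce a norm bound on $A$. A correct elementary route is to observe that $\tau_{w^*}\subset\gamma[\Lip,\tau_{w^*}]=\tau_\gamma$, so a $\tau_\gamma$-bounded set is weak*-bounded, and then apply the uniform boundedness principle in $\F(X)'=\Lipo(X)$. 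For completeness in (vii), the assertion ``a $\tau_\gamma$-Cauchy net is $\tau_\gamma$-bounded'' is false for nets in general: only each tail $\{x_\alpha:\alpha\ge\alpha_0\}$ is contained in a translate of a prescribed neighbourhood, with $\alpha_0$ depending on that neighbourhood, so one cannot trap the whole net (or even a single tail) in one $MB_{\Lipo(X)}$. Since $(\Lipo(X),\tau_\gamma)$ is not metrizable, sequential completeness does not suffice, and a different mechanism (as in Cooper's Proposition 1.13, or via Grothendieck's completeness criterion) is needed.
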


\begin{proof} 
The statements (i)--(vii) follow immediately from the theory of \cite[Chapter I]{c}. More namely, (i) and (ii) follow from Proposition 1.5; (iii) from Corollary 1.7; (iv) from Proposition 1.10; (v) from Proposition 1.11; (vi) from Proposition 1.12; and (vii) from Propositions 1.13 and 1.26 and the Ascoli theorem. 
\end{proof}

The property (ii) above justifies the name of topology of bounded compact convergence for $\tau_\gamma$. We next improve this property.

\begin{theorem}\label{new tmf}
Let $X$ be a pointed metric space.
\begin{enumerate}
\item $\tau_{\gamma}$ is the largest topology on $\Lipo(X)$ which agrees with $\tau_0$ on each norm bounded subset of $\Lipo(X)$.
\item A subset $U$ of $\Lipo(X)$ is open (closed) in $(\Lipo(X),\tau_{\gamma})$ if and only if $U\cap B$ is open (closed) in $(B,\tau_0)$ for each norm bounded subset $B$ of $\Lipo(X)$.
\item $\F(X)=(\Lipo(X),\tau_{\gamma})'_b=(\Lipo(X),\tau_{\gamma})'_c$.
\item The evaluation map $f\mapsto T_f$ from $(\Lipo(X),\tau_{\gamma})$ to $\F(X)'_c$ is a topological isomorphism.
\end{enumerate}
\end{theorem}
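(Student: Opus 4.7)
My plan is to handle the four statements in order, with (ii) doing most of the work for (i)-(ii), and (iv) relying on (iii).

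For (i) and (ii), the essential new input beyond Theorem~\ref{tmf} is that $B_{\Lipo(X)}$ is genuinely $\tau_0$-compact (Ascoli), not merely closed and bounded. I would prove (ii) directly, then deduce (i). The $(\Rightarrow)$ direction of (ii) is immediate from Theorem~\ref{tmf}(ii). For the $(\Leftarrow)$ direction, assume $0\in U$ and $U\cap B$ is $\tau_0$-open in $B$ for every norm bounded $B$; I construct inductively convex balanced $\tau_0$-closed neighborhoods $V_n$ of $0$ so that $K_n=V_1\cap B_{\Lipo(X)}+V_2\cap 2B_{\Lipo(X)}+\dots+V_n\cap 2^{n-1}B_{\Lipo(X)}\subset U$ for every $n$. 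Each $V_i\cap 2^{i-1}B_{\Lipo(X)}$ is $\tau_0$-closed in the $\tau_0$-compact set $2^{i-1}B_{\Lipo(X)}$ and hence $\tau_0$-compact, so $K_n$ is $\tau_0$-compact as a finite sum of such; since $U\cap 2^{n+1}B_{\Lipo(X)}$ is $\tau_0$-open in $2^{n+1}B_{\Lipo(X)}$ and contains $K_n$, a standard compactness-thickening argument produces $V_{n+1}$ with $K_n+V_{n+1}\cap 2^n B_{\Lipo(X)}\subset U$. Then $\gamma(V_1,V_2,\dots)\subset U$ is the desired $\tau_\gamma$-neighborhood of $0$. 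Statement (i) follows: if $\tau'$ agrees with $\tau_0$ on bounded sets and $U$ is $\tau'$-open, then $U\cap B$ is $\tau_0$-open in $B$ for each bounded $B$, so $U$ is $\tau_\gamma$-open by (ii); hence $\tau'\subset\tau_\gamma$.

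For (iii), I identify the dual set-theoretically and then its topology. A linear $\varphi\colon\Lipo(X)\to\K$ is $\tau_\gamma$-continuous iff, by Theorem~\ref{tmf}(iii), $\varphi|_B$ is $\tau_0$-continuous for every norm bounded $B$; by linearity and scaling this reduces to $\tau_0$-continuity on $B_{\Lipo(X)}$, which is precisely the characterization of $\F(X)$ in Theorem~\ref{mainth}(vi). Thus $(\Lipo(X),\tau_\gamma)'=\F(X)$ as vector spaces. For the topologies: Theorem~\ref{tmf}(v) identifies $\tau_\gamma$-bounded sets with norm bounded sets, so the strong dual topology equals the norm topology inherited from $\Lipo(X)^*$, which is the norm of $\F(X)$. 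By Theorem~\ref{tmf}(vii), $(\Lipo(X),\tau_\gamma)$ is semi-Montel, so every bounded set is relatively compact; uniform convergence on convex balanced compact sets therefore coincides with uniform convergence on bounded sets, giving $(\Lipo(X),\tau_\gamma)'_b=(\Lipo(X),\tau_\gamma)'_c=\F(X)$.

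For (iv)---the part I expect to be the main obstacle---the map $E\colon f\mapsto T_f$ is an isometric bijection $\Lipo(X)\to\F(X)'$ by Theorem~\ref{mainth}(v), so only the comparison of topologies remains. For continuity of $E\colon(\Lipo(X),\tau_\gamma)\to\F(X)'_c$, I apply Theorem~\ref{tmf}(iii) to the linear map $E$ into the locally convex space $\F(X)'_c$; it suffices that, for every convex balanced compact $C\subset\F(X)$, the seminorm $f\mapsto\sup_{\mu\in C}|\mu(f)|$ is $\tau_0$-continuous on each norm bounded $B$. Given $\varepsilon>0$ and a bound $M$ on $\Lip|_B$, cover the norm-compact $C$ by finitely many balls $\mu_i+(\varepsilon/4M)B_{\F(X)}$; then for $f,f_0\in B$,
$$
\sup_{\mu\in C}|\mu(f-f_0)|\leq \max_i|\mu_i(f-f_0)|+\frac{\varepsilon}{4M}\Lip(f-f_0)\leq \max_i|\mu_i(f-f_0)|+\frac{\varepsilon}{2},
$$
and each $\mu_i|_B$ is $\tau_0$-continuous by Theorem~\ref{mainth}(vi). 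For continuity of $E^{-1}$, which is the delicate point, I would appeal to the general duality for complete semi-Montel spaces: since $(\Lipo(X),\tau_\gamma)$ is such a space (Theorem~\ref{tmf}(vii)) with dual $\F(X)$ by (iii), the canonical evaluation $(\Lipo(X),\tau_\gamma)\to((\Lipo(X),\tau_\gamma)'_c)'_c=\F(X)'_c$ is a topological isomorphism, and it coincides with $E$; the precise argument can be extracted from the treatment in Dineen--Mujica~\cite{dm}.
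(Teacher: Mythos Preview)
Your argument is essentially correct and closely parallels the paper's, but the packaging differs in two places worth noting.

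For (i)--(ii) the paper simply invokes \cite[Corollary 4.2]{c}, whose proof is precisely the compactness-thickening induction you sketch: using $\tau_0$-compactness of the multiples of $B_{\Lipo(X)}$ to build the $\gamma$-neighbourhood $\gamma(V_1,V_2,\dots)$ inside $U$. So your direct argument is not a different route; it is an unpacking of the cited reference, which makes the write-up more self-contained at the cost of some length.

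For (iii) your argument coincides with the paper's almost verbatim.

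For (iv) your treatment diverges slightly from the paper's. You prove continuity of $E$ by hand (a correct and pleasant argument, though unnecessary once the other direction is in place), and then for $E^{-1}$ you appeal to a ``general duality for complete semi-Montel spaces''. This last phrase overreaches: completeness plus semi-Montel alone does not yield $E\cong (E'_c)'_c$ topologically in general. What actually makes it work here is the specific \emph{hemicompact $k$-space} structure you established in (i)--(ii): the increasing sequence $\{nB_{\Lipo(X)}\}$ of convex balanced $\tau_\gamma$-compact sets determines the topology in the sense that $U$ is $\tau_\gamma$-open iff each $U\cap nB_{\Lipo(X)}$ is relatively open. The paper isolates exactly this property and applies \cite[Theorem 4.1]{m} (Mujica, not Dineen--Mujica \cite{dm}) to conclude that the evaluation $(\Lipo(X),\tau_\gamma)\to ((\Lipo(X),\tau_\gamma)'_b)'_c=\F(X)'_c$ is a topological isomorphism, getting both directions at once. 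Your separate continuity proof of $E$ is then superfluous, and your citation should be sharpened to \cite[Theorem 4.1]{m} with the hemicompact $k$-space hypothesis made explicit rather than ``complete semi-Montel''.
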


\begin{proof} 
The statements (i) and (ii) follow from \cite[Corollary 4.2]{c}. We now prove (iii). From Theorem \ref{mainth} (vi) and Theorem \ref{tmf} (ii)-(iii), we deduce that $\F(X)=(\Lipo(X),\tau_{\gamma})'$ algebraically. Since $\F(X)$ is a linear subspace of $(\Lipo(X),\tau_{\Lip})'$ by Theorem \ref{mainth} (i) and both spaces $(\Lipo(X),\tau_{\Lip})$ and $(\Lipo(X),\tau_{\gamma})$ have the same bounded sets by Theorem \ref{tmf} (v), we infer that $\F(X)=(\Lipo(X),\tau_{\gamma})'_b$. The identification $(\Lipo(X),\tau_{\gamma})'_b=(\Lipo(X),\tau_{\gamma})'_c$ follows from the fact that $(\Lipo(X),\tau_{\gamma})$ is a semi-Montel space, and the proof of (iii) is finished. 

To prove (iv), notice that 
$\left\{nB_{\Lipo(X)}\right\}$ is an increasing sequence of convex, balanced and $\tau_\gamma$-compact subsets of $\Lipo(X)$ (by Theorem \ref{tmf} (vi) and the Ascoli theorem) with the property that a set $U\subset\Lipo(X)$ is $\tau_\gamma$-open whenever $U\cap nB_{\Lipo(X)}$ is open in $(nB_{\Lipo(X)},\tau_\gamma)$ for every $n\in\N$. This property can be proved easily using the statements (i) and (ii).  
Then, by applying \cite[Theorem 4.1]{m}, the evaluation map from $(\Lipo(X),\tau_{\gamma})$ to $((\Lipo(X),\tau_{\gamma})'_b)'_c$ is a topological isomorphism. Since $\F(X)=(\Lipo(X),\tau_{\gamma})'_b$ by (iii), the statement (iv) holds.
\end{proof}

\begin{remark}\label{r1}
All assertions of Theorems \ref{tmf} and \ref{new tmf} are valid if the topology $\tau_0$ is replaced by $\tau_p$ or $\tau_{w^*}$. 
\end{remark}

We now recall that if $E$ is a Banach space, then the bounded weak* topology on its dual $E'$, denoted by $\tau_{bw^*}$, is the largest topology on $E'$ agreeing with the topology $\tau_{w^*}$ on norm bounded sets \cite[V.5.3]{ds}. According to the Banach-Dieudonn\'{e} theorem \cite[V.5.4]{ds}, $\tau_{bw^*}$ is just the topology of uniform convergence on sequences in $E$ which tend in norm to zero. 

Since $\tau_{w^*}=\tau_0$ on $B_{\Lipo(X)}$, the assertion (i) of Theorem \ref{new tmf} gives the following.

\begin{corollary}\label{tmf0}
Let $X$ be a pointed metric space. On the space $\Lipo(X)$, the bounded weak* topology $\tau_{bw^*}$ is the topology $\tau_\gamma$.
\end{corollary}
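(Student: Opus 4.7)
The plan is to show both $\tau_{bw^*}$ and $\tau_\gamma$ fit the same universal characterization, so they must agree. By Theorem \ref{new tmf}(i), $\tau_\gamma$ is the largest topology on $\Lipo(X)$ which coincides with $\tau_0$ on each norm bounded subset. By the definition of the bounded weak* topology recalled just above the corollary (from \cite[V.5.3]{ds}), $\tau_{bw^*}$ is the largest topology on $\Lipo(X)=\F(X)'$ agreeing with $\tau_{w^*}$ on each norm bounded subset. Hence it suffices to check that $\tau_0$ and $\tau_{w^*}$ induce the same topology on every norm bounded subset of $\Lipo(X)$.

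This has essentially been established in the excerpt: the paragraph preceding Theorem \ref{tmf} observes that $\tau_p=\tau_{w^*}=\tau_0$ on $B_{\Lipo(X)}$, where the equality $\tau_{w^*}=\tau_0$ on $B_{\Lipo(X)}$ comes from Theorem \ref{mainth}(vi) combined with $\tau_0$-compactness of $B_{\Lipo(X)}$ (Ascoli). Since every norm bounded subset of $\Lipo(X)$ is contained in $nB_{\Lipo(X)}$ for some $n\in\N$, scaling by $n$ transports the equality to $nB_{\Lipo(X)}$, and hence $\tau_{w^*}=\tau_0$ on every norm bounded subset of $\Lipo(X)$.

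Combining the three observations, both $\tau_\gamma$ and $\tau_{bw^*}$ are maximal among topologies on $\Lipo(X)$ that restrict to the common topology $\tau_0=\tau_{w^*}$ on norm bounded subsets, so $\tau_\gamma=\tau_{bw^*}$.

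I do not foresee any real obstacle: the work has already been done in Theorem \ref{new tmf}(i), which is itself an application of \cite[Corollary 4.2]{c}. The only care point is the scaling argument used to promote the equality $\tau_{w^*}=\tau_0$ from $B_{\Lipo(X)}$ to arbitrary norm bounded subsets, but this is immediate from the fact that both $\tau_{w^*}$ and $\tau_0$ are locally convex vector space topologies and therefore invariant under dilations.
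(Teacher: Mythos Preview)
Your proof is correct and follows essentially the same approach as the paper: the paper's one-line justification is simply ``Since $\tau_{w^*}=\tau_0$ on $B_{\Lipo(X)}$, the assertion (i) of Theorem \ref{new tmf} gives the following,'' which is exactly the universal-characterization argument you spell out. Your version is slightly more explicit about the scaling step, but the idea is identical.
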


\section{Seminorm descriptions of $\tau_\gamma$ on $\Lipo(X)$}\label{3}

Our aim in this section is to give a pair of descriptions for $\tau_{\gamma}$ by means of seminorms. By Theorem \ref{new tmf} (ii) and Remark \ref{r1}, the convex balanced sets $U\subset\Lipo(X)$ such that $U\cap nB_{\Lipo(X)}$ is a neighborhood of zero in $(nB_{\Lipo(X)},\tau_{w^*})$ for every $n\in\N$, form a base of neighborhoods of zero for $\tau_{\gamma}$. For our purposes, we will need the next lemma. For $f\in\Lipo(X)$ and $A\subset X$, define   
$$
\Lip_A(f)=\sup\left\{\frac{\left|f(x)-f(y)\right|}{d(x,y)}\colon x,y\in A, \; x\neq y\right\}.
$$
Notice that if $F\subset X$ is finite, then $\Lip_F(f)=p_G(f)$ (see Section \ref{1}) where $G$ is the finite subset of $\F(X)$ given by 
$$
G=\left\{\frac{\delta_x-\delta_y}{d(x,y)}\colon x,y\in F, \; x\neq y\right\},
$$
and hence, for each $\varepsilon>0$, the set $\left\{f\in\Lipo(X)\colon\Lip_F(f)\leq\varepsilon\right\}$ is a neighborhood of $0$ in $(\Lipo(X),\tau_{w^*})$.

\begin{lemma}\label{l10}
Let $X$ be a pointed metric space. Then the sets of the form
$$
U=\bigcap_{n=1}^{\infty}\left\{f\in\Lipo(X)\colon\Lip_{F_n}(f)\leq\lambda_n\right\},
$$
where $\{F_n\}$ is a sequence of finite subsets of $X$ and $\{\lambda_n\}$ is a sequence of positive numbers tending to $\infty$, form a base of neighborhoods of zero in $(\Lipo(X),\tau_{\gamma})$.
\end{lemma}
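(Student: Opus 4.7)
I prove both inclusions. The forward direction is routine from Theorem \ref{new tmf} (ii), while the backward direction is the genuinely technical step; for it I use the identity $\tau_\gamma=\tau_{bw^*}$ (Corollary \ref{tmf0}) together with the Banach--Dieudonn\'e theorem, reducing to an approximation of norm-null sequences in $\F(X)$ by norm-null sequences of finite combinations of the functionals $(\delta_x-\delta_y)/d(x,y)$.

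\textbf{Forward direction.} Fix $U=\bigcap_n\{f:\Lip_{F_n}(f)\leq\lambda_n\}$ with $\lambda_n\to\infty$; the set is convex and balanced. By Theorem \ref{new tmf} (ii) and Remark \ref{r1}, I need only show that $U\cap kB_{\Lipo(X)}$ is a $\tau_{w^*}$-neighborhood of $0$ in $kB_{\Lipo(X)}$ for every $k\in\N$. Pick $N_k\in\N$ with $\lambda_n\geq k$ for $n\geq N_k$; on $kB_{\Lipo(X)}$ the inequalities $\Lip_{F_n}(f)\leq\Lip(f)\leq k\leq\lambda_n$ are automatic for such $n$, so
\[
U\cap kB_{\Lipo(X)}=\Bigl(\bigcap_{n=1}^{N_k-1}\{f:\Lip_{F_n}(f)\leq\lambda_n\}\Bigr)\cap kB_{\Lipo(X)},
\]
a finite intersection of basic $\tau_{w^*}$-neighborhoods of $0$ (as noted in the comment preceding the lemma) intersected with $kB_{\Lipo(X)}$.

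\textbf{Backward direction.} Let $V$ be a $\tau_\gamma=\tau_{bw^*}$-neighborhood of $0$. By Banach--Dieudonn\'e, $V\supset\{f:\sup_m|\varphi_m(f)|\leq 1\}$ for some norm-null sequence $(\varphi_m)\subset\F(X)$. The span of the normalized elementary functionals $m_{xy}:=(\delta_x-\delta_y)/d(x,y)$ is dense in $\F(X)$ by Theorem \ref{mainth} (i) together with the identity $\delta_0=0$ in $\F(X)$. By successively halving the residual at each step, I write each $\varphi_m$ as a norm-convergent series $\varphi_m=\sum_{k\geq 0}\psi_m^{(k)}$ with $\psi_m^{(k)}$ a finite combination of the $m_{xy}$'s and $\|\psi_m^{(k)}\|\leq 3\|\varphi_m\|/2^{k+1}$. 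Setting $C_0=(1-2^{-1/2})^{-1}$ and $\bar\psi_{(m,k)}:=C_0\,2^{k/2}\psi_m^{(k)}$, the expansion $\varphi_m=\sum_k(2^{-k/2}/C_0)\bar\psi_{(m,k)}$ exhibits $\varphi_m$ as an absolutely convex combination, so $\varphi_m\in\overline{\aco}\{\bar\psi_\ell\}$; meanwhile $\|\bar\psi_{(m,k)}\|\leq 3C_0\|\varphi_m\|\,2^{-k/2-1}$ makes the doubly-indexed family $(\bar\psi_\ell)$ norm-null under any bijective enumeration $\ell\mapsto(m_\ell,k_\ell)$ of $\N\times\N$ (for each $\varepsilon>0$ only finitely many pairs satisfy $\|\varphi_m\|\,2^{-k/2}\geq\varepsilon$). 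Hence $V\supset\{f:|\bar\psi_\ell(f)|\leq 1,\forall\ell\}$. Finally, pick (via the Kantorovich--Rubinstein duality) for each $\ell$ a representation $\bar\psi_\ell=\sum_i c_{\ell,i}m_{x_{\ell,i},y_{\ell,i}}$ with $\sum_i|c_{\ell,i}|\leq 2\|\bar\psi_\ell\|$, and set $F_\ell=\{x_{\ell,i},y_{\ell,i}\}_i$ (finite) and $\lambda_\ell=1/(2\|\bar\psi_\ell\|)$. The estimate $|\bar\psi_\ell(f)|\leq 2\|\bar\psi_\ell\|\Lip_{F_\ell}(f)$ and the divergence $\lambda_\ell\to\infty$ then yield $\bigcap_\ell\{f:\Lip_{F_\ell}(f)\leq\lambda_\ell\}\subset V$.

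\textbf{Main obstacle.} The real difficulty lies in the backward direction: the null sequence $(\varphi_m)$ produced by Banach--Dieudonn\'e need not consist of finite molecular combinations, so one cannot read off $F_\ell$ and $\lambda_\ell$ directly from $(\varphi_m)$. The halving-and-rescaling device solves this by producing a replacement norm-null sequence $(\bar\psi_\ell)$ of finite combinations covering $(\varphi_m)$ in closed absolutely convex hull, with the geometric decay of $\|\psi_m^{(k)}\|$ precisely balanced by the $2^{k/2}$ factor, so that simultaneously the rescaled family stays null under every enumeration and the reciprocal norms $1/\|\bar\psi_\ell\|$ (the candidate thresholds $\lambda_\ell$) diverge to infinity.
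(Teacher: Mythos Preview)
Your forward direction coincides with the paper's. Your backward direction is correct but follows a genuinely different path from the paper's.

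The paper proves the backward inclusion by a direct compactness argument on the balls $nB_{\Lipo(X)}$. Given a $\tau_\gamma$-neighborhood $U$, one first picks $\varepsilon>0$ with $\varepsilon B_{\Lipo(X)}\subset U$, and then inductively constructs the finite sets $F_n$: at stage $n$, if no finite $F_{n+1}$ worked, the family of closed sets
\[
C_F=\bigcap_{k\le n}\{f:\Lip_{F_k}(f)\le\lambda_k\}\cap\{f:\Lip_F(f)\le n\},\qquad F\subset X\text{ finite},
\]
would have the finite intersection property on the $\tau_{w^*}$-compact set $(n+1)B_{\Lipo(X)}\setminus U$, producing an $f_0$ with $\Lip(f_0)\le n$, which forces $f_0\in U$ by the inductive hypothesis and yields a contradiction. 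The $\lambda_k$ are fixed in advance as $\lambda_1=\varepsilon$, $\lambda_k=k-1$ for $k>1$.

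Your route instead passes through the predual: you invoke $\tau_\gamma=\tau_{bw^*}$ and Banach--Dieudonn\'e to replace $V$ by the polar of a norm-null sequence $(\varphi_m)\subset\F(X)$, then use the density of finite molecules together with the Arens--Eells description of the norm to manufacture a norm-null \emph{molecular} sequence $(\bar\psi_\ell)$ whose closed absolutely convex hull contains the $\varphi_m$, from which $F_\ell$ and $\lambda_\ell=1/(2\|\bar\psi_\ell\|)$ are read off. This is more conceptual---it makes transparent why $\lambda_\ell\to\infty$ (it is the reciprocal of a null sequence)---and it anticipates the seminorm description in Theorem~\ref{c9} and the structure of Lemma~\ref{c0}. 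The paper's argument, by contrast, is more self-contained: it uses nothing beyond the $\tau_{w^*}$-compactness of $B_{\Lipo(X)}$ and avoids importing Banach--Dieudonn\'e and the Arens--Eells norm formula. A small cosmetic point in your write-up: when $\bar\psi_\ell=0$ (which can occur once a residual vanishes) simply discard that index; the constraint it imposes is vacuous.
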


\begin{proof}
We first claim that if $\{F_k\}$ and $\{\lambda_k\}$ are sequences as above, then the set 
$$
\bigcap_{k=1}^{\infty}\left\{f\in\Lipo(X)\colon\Lip_{F_k}(f)\leq\lambda_k\right\}
$$
is a neighborhood of $0$ in $(\Lipo(X),\tau_{\gamma})$. Indeed, given $n\in\N$, if $m\in\mathbb{N}$ is chosen so that $\lambda_k\geq n$ for $k>m$, then 
$$
\bigcap_{k=1}^{\infty}\left\{f\in\Lipo(X)\colon\Lip_{F_k}(f)\leq\lambda_k\right\}\cap n B_{\Lipo(X)}
=\bigcap_{n=1}^{m}\left\{f\in\Lipo(X)\colon\Lip_{F_k}(f)\leq\lambda_k\right\}\cap n B_{\Lipo(X)}.
$$
The latter is a neighborhood of $0$ in $(nB_{\Lipo(X)},\tau_{w^*})$, and this proves our claim.

We now must prove that if $U$ is a neighborhood of $0$ in $(\Lipo(X),\tau_{\gamma})$, then there are sequences $\{F_k\}$ and $\{\lambda_k\}$ as above for which 
$$
\bigcap_{k=1}^{\infty}\left\{f\in\Lipo(X)\colon\Lip_{F_k}(f)\leq\lambda_k\right\}\subset U.
$$
Indeed, we can take a set $U\subset\Lipo(X)$ such that $U\cap nB_{\Lipo(X)}$ is an open neighborhood of $0$ in $(nB_{\Lipo(X)},\tau_{w^*})$ for every $n\in\N$. In particular, $U\cap B_{\Lipo(X)}$ is a neighborhood of $0$ in $(B_{\Lipo(X)},\tau_{\Lip})$ and then there exists $\varepsilon>0$ such that $\varepsilon B_{\Lipo(X)}\subset U$. In order to prove that there exists a finite set $F_1\subset X$ such that 
$$
\left\{f\in\Lipo(X)\colon\Lip_{F_1}(f)\leq\varepsilon\right\}\cap B_{\Lipo(X)}\subset U,
$$
assume on the contrary that the set
$$
\left\{f\in\Lipo(X)\colon\Lip_{F}(f)\leq\varepsilon\right\}\cap(B_{\Lipo(X)}\setminus U)
$$
is nonempty for every finite set $F\subset X$. These sets are closed in $(B_{\Lipo(X)}\setminus U,\tau_{w^*})$ and have the finite intersection property. Since the set
$B_{\Lipo(X)}\setminus U$ is a closed, and therefore compact, subset of $(B_{\Lipo(X)},\tau_{w^*})$, we infer that there exists some $f\in B_{\Lipo(X)}\setminus U$ such that $\Lip_{F}(f)\leq\varepsilon$ for each finite set $F\subset X$. This implies that $f\in\varepsilon B_{\Lipo(X)}\setminus U$ which is impossible, and thus proving our assertion. 

Proceeding by induction, suppose that we can find finite subsets $F_2,\ldots,F_n$ of $X$ such that 
$$
\bigcap_{k=1}^n\left\{f\in\Lipo(X)\colon\Lip_{F_k}(f)\leq\lambda_k\right\}\cap nB_{\Lipo(X)}\subset U\cap nB_{\Lipo(X)},
$$
where $\lambda_1=\varepsilon$ and $\lambda_k=k-1$ for $k>1$. We will prove that there exists a finite set $F_{n+1}\subset X$ such that
$$
\bigcap_{k=1}^{n+1}\left\{f\in\Lipo(X)\colon\Lip_{F_k}(f)\leq\lambda_k\right\}\cap (n+1)B_{\Lipo(X)}\subset U\cap (n+1)B_{\Lipo(X)}.
$$
We argue by contradiction. If no such finite set  $F_{n+1}$ exists, then the set
$$
C_F:=\bigcap_{k=1}^n\left\{f\in\Lipo(X)\colon\Lip_{F_k}(f)\leq\lambda_k\right\}\cap\left\{f\in\Lipo(X)\colon\Lip_F(f)\leq n\right\}
$$
has nonempty intersection with the $\tau_{w^*}$-compact set $(n+1)B_{\Lipo(X)}\setminus U$ for each finite set $F\subset X$. So, by the finite intersection property, there is a $f_0\in \left((n+1)B_{\Lipo(X)}\setminus U\right)\cap\left(\cap_{F}C_F\right)$. Therefore $\Lip_F(f_0)\leq n$ for each $F$ and so $\Lip(f_0)\leq n$. Then $f_0\in U\cap nB_{\Lipo(X)}\subset U\cap (n+1)B_{\Lipo(X)}$ which is a contradiction.

Then we can construct, by induction, a sequence $\{F_k\}$ of finite subsets of $X$ so that
$$
\bigcap_{k=1}^{n}\left\{f\in\Lipo(X)\colon\Lip_{F_k}(f)\leq\lambda_k\right\}\cap nB_{\Lipo(X)}\subset U
$$
for every $n\in\N$. Since $\Lipo(X)=\cup_{n=1}^\infty nB_{\Lipo(X)}$, we conclude that 
$$
\bigcap_{k=1}^{\infty}\left\{f\in\Lipo(X)\colon\Lip_{F_k}(f)\leq\lambda_k\right\}\subset U.
$$ 
\end{proof}

A first characterization of $\tau_{\gamma}$ by means of seminorms lies over the concept of strict topology, introduced by Buck in \cite{bu}, for spaces of continuous functions on locally compact spaces.

Let $X$ be a pointed metric space. We denote
$$
\widetilde{X}=\left\{(x,y)\in X^2\colon x\neq y\right\}.
$$
Let $C_b(\widetilde{X})$ be the space of bounded continuous scalar-valued functions on $\widetilde{X}$ with the supremum norm, and let $\Phi$ be De Leeuw's map from $\Lipo(X)$ into $C_b(\widetilde{X})$ defined by 
$$
\Phi(f)(x,y)=\frac{f(x)-f(y)}{d(x,y)}.
$$
Clearly, $\Phi$ is an isometric isomorphism from $\Lipo(X)$ onto the closed subspace $\Phi(\Lipo(X))$ 
of $C_b(\widetilde{X})$.

\begin{definition}
Let $X$ be a compact pointed metric space. The strict topology $\beta$ on $\Lipo(X)$ is the strict topology on $\Phi(\Lipo(X))$, that is the locally convex topology generated by the seminorms of the form 
$$
\left\|f\right\|_\phi=\sup_{(x,y)\in\widetilde{X}}\left|\phi(x,y)\right|\frac{\left|f(x)-f(y)\right|}{d(x,y)},\qquad f\in\Lipo(X),
$$
where $\phi$ runs through the space $C_0(\widetilde{X})$ of continuous functions from $\widetilde{X}$ into $\K$ which vanish at infinity. 
\end{definition}

\begin{theorem}\label{sha}
Let $X$ be a compact pointed metric. On the space $\Lipo(X)$, the strict topology $\beta$ is the topology $\tau_\gamma$.
\end{theorem}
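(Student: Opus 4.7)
The plan is to exhibit $\beta$ as a mixed topology on $\Lipo(X)$ and then invoke Cooper's stability theorem. Since $X$ is compact, $\widetilde{X}=X^{2}\setminus\Delta$ is locally compact Hausdorff, and De Leeuw's map $\Phi$ embeds $\Lipo(X)$ isometrically as a closed subspace of $C_b(\widetilde{X})$. A classical theorem of Buck and Collins--Dorroh (presented in \cite{c}) asserts that on $C_b(Y)$ for any locally compact Hausdorff $Y$, the strict topology coincides with the mixed topology $\gamma[\left\|\cdot\right\|_\infty,\tau_K]$, where $\tau_K$ is the compact-open topology. Combined with the fact, also in \cite{c}, that the mixed topology on a closed subspace of a Saks space is the mixed topology of the restricted Saks structure, this identifies $\beta$ on $\Lipo(X)$ with the mixed topology $\gamma[\Lip,\kappa]$, where $\kappa$ is the locally convex topology generated by the seminorms $f\mapsto\sup_{(x,y)\in K}|\Phi(f)(x,y)|$ for $K$ compact in $\widetilde{X}$.

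The next step is to show that $\kappa$ agrees with $\tau_0$ on $B_{\Lipo(X)}$. For one inclusion, every $x\in X\setminus\{0\}$ yields
\[
|f(x)|=d(x,0)\,|\Phi(f)(x,0)|,
\]
so the $\tau_p$-seminorm at $x$ equals (a constant times) the $\kappa$-seminorm coming from the compact singleton $\{(x,0)\}\subset\widetilde{X}$; hence $\tau_p\subset\kappa$. Conversely, given a compact $K\subset\widetilde{X}$, the metric $d$ is continuous and strictly positive on $K$, so $m:=\min_K d>0$; with $K':=\pi_1(K)\cup\pi_2(K)$, compact in $X$, one has
\[
\sup_{(x,y)\in K}|\Phi(f)(x,y)|\le\frac{2}{m}|f|_{K'},
\]
and thus $\kappa\subset\tau_0$. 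Since $\tau_p=\tau_0$ on $B_{\Lipo(X)}$ by the Ascoli theorem, the three topologies $\kappa$, $\tau_p$, and $\tau_0$ coincide on the unit ball.

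Finally, by \cite[Corollary~1.6]{c} the mixed topology is insensitive to replacing the fine topology with any other topology that agrees with it on the unit ball, so
\[
\beta=\gamma[\Lip,\kappa]=\gamma[\Lip,\tau_0]=\tau_\gamma,
\]
which completes the identification. The main technical input is the first step, namely quoting the Buck--Collins--Dorroh identification of the strict topology as a mixed topology together with its stability under passage to closed subspaces of Saks spaces; once this is in hand, the rest is a short estimate that exploits the compactness of $X$ in an essential way, since this is what guarantees that compact subsets of $\widetilde{X}$ stay uniformly away from the diagonal.
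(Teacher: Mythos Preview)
Your strategy is attractive and quite different from the paper's: instead of checking the two inclusions between $\beta$ and $\tau_\gamma$ by hand, you recognise $\beta$ on $C_b(\widetilde X)$ as a mixed topology (Buck--Collins--Dorroh), restrict to the closed subspace $\Phi(\Lipo(X))$, and then invoke \cite[Corollary~1.6]{c}. Steps (1), (3) and (4) in your outline are correct and cleanly argued; in particular the sandwich $\tau_p\subset\kappa\subset\tau_0$ together with Ascoli is exactly the right way to see that $\kappa$ and $\tau_0$ agree on the unit ball.

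The one place that needs more care is step (2): the assertion that the mixed topology on a closed subspace of a Saks space coincides with the restricted mixed topology. One inclusion is immediate (the restriction $\gamma_E|_F$ is a locally convex topology on $F$ agreeing with $\tau|_F$ on bounded sets, so it is coarser than $\gamma_F$), but the reverse inclusion $\gamma_F\subset\gamma_E|_F$ is not a formality: a basic $\gamma_F$-neighbourhood $\gamma((W_n\cap F))$ is contained in $\gamma((W_n))\cap F$, not the other way round, because a decomposition $f=\sum g_k$ with $g_k\in W_k\cap 2^{k-1}B_{C_b}$ need not have the summands in $F$. I do not believe this restriction property is stated in \cite{c} in the generality you need, and without it the inclusion $\tau_\gamma\subset\beta$ is left open. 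The paper handles precisely this direction by hand: starting from Lemma~\ref{l10} it writes a $\tau_\gamma$-neighbourhood as $\bigcap_n\{\Lip_{F_n}(f)\le\lambda_n\}$ with $\lambda_n\to\infty$ and then manufactures a single $\phi\in C_0(\widetilde X)$ with $\{\|f\|_\phi\le1\}$ inside it. That explicit construction (or something equivalent, e.g.\ showing every $\varphi\in\F(X)$ arises from a Radon measure on $\widetilde X$) is the missing ingredient in your argument. If you can supply a clean reference in \cite{c} for the subspace claim, your proof stands and is more conceptual; otherwise you should either fill that step or revert to the paper's direct construction for the inclusion $\tau_\gamma\subset\beta$.
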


\begin{proof}
We first show that the identity is a continuous mapping from $(\Lipo(X),\tau_\gamma)$ to $(\Lipo(X),\beta)$. By Theorem \ref{tmf} (iii), it is enough to show that the identity on $nB_{\Lipo(X)}$ is continuous on $(nB_{\Lipo(X)},\tau_0)$ for every $n\in\N$. Let $n\in\N$ and fix $\phi\in C_0(\widetilde{X})$ and $\varepsilon>0$. Then there is a compact set $K\subset\widetilde{X}$ such that $\left|\phi(x,y)\right|<\varepsilon/2n$ if $(x,y)\in \widetilde{X}\setminus K$. Take 
$$
U=\left\{f\in\Lipo(X)\colon \sup_{(x,y)\in K}\frac{\left|f(x)-f(y)\right|}{d(x,y)}\leq\frac{\varepsilon}{2(1+\left\|\phi\right\|_\infty)}\right\}.
$$ 
We now prove that $U$ is a neighborhood of $0$ in $(\Lipo(X),\tau_\gamma)$. Indeed, define $\sigma\colon\widetilde{X}\to\F(X)$ by 
$$
\sigma(x,y)=\frac{\delta_x-\delta_y}{d(x,y)}.
$$
Since the maps $x\mapsto\delta_x$ and $(x,y)\mapsto d(x,y)$ are continuous, so is also $\sigma$. Then $\sigma(K)$ is a compact subset of $\F(X)$ and therefore the polar 
$$
\sigma(K)^\circ:=\left\{F\in\F(X)'\colon \sup_{(x,y)\in K}\left|F(\sigma(x,y))\right|\leq 1\right\} 
$$
is a neighborhood of $0$ in $\F(X)'_c$. Then, by Theorem \ref{new tmf} (iv), the set 
$\left\{f\in\Lipo(X)\colon T_f\in\sigma(K)^\circ\right\}$, that is
$$
\left\{f\in\Lipo(X)\colon \sup_{(x,y)\in K}\frac{\left|f(x)-f(y)\right|}{d(x,y)}\leq 1\right\},
$$
is a neighborhood of $0$ in $(\Lipo(X),\tau_\gamma)$, and hence so is $U$ as required. It follows that $U\cap nB_{\Lipo(X)}$ is a neighborhood of $0$ in $(nB_{\Lipo(X)},\tau_0)$ by Theorem \ref{new tmf} (ii). If $f\in U\cap n B_{\Lipo(X)}$, we have 
\begin{align*}
\left\|f\right\|_\phi
&\leq\sup_{(x,y)\in K}\left|\phi(x,y)\right|\frac{\left|f(x)-f(y)\right|}{d(x,y)}
+\sup_{(x,y)\in\widetilde{X}\setminus K}\left|\phi(x,y)\right|\frac{\left|f(x)-f(y)\right|}{d(x,y)}\\
&\leq\left\|\phi\right\|_\infty\frac{\varepsilon}{2(1+\left\|\phi\right\|_\infty)}+\frac{\varepsilon}{2n}n<\varepsilon .
\end{align*}
Conversely, let $U$ be a neighborhood of $0$ in $(\Lipo(X),\tau_\gamma)$. By Lemma \ref{l10}, we can suppose that 
$$
U=\bigcap_{n=1}^{\infty}\left\{f\in\Lipo(X)\colon\Lip_{F_n}(f)\leq\lambda_n\right\}
$$
where $\{F_n\}$ is a sequence of finite subsets of $X$ and $\{\lambda_n\}$ is a sequence of positive numbers tending to $\infty$. We can further suppose that $F_n\subset F_{n+1}$ and $\lambda_{n}<\lambda_{n+1}$ for all $n\in\N$. We can construct a function $\phi$ in $C_0(\widetilde{X})$ with $\{(x,y)\in\widetilde{X}\colon \phi(x,y)\neq 0\}\subset\cup_{n=1}^\infty F_n$ so that $\phi(x,y)=1/\lambda_1$ if $(x,y)\in F_1$ and $1/\lambda_{n+1}\leq\phi(x,y)\leq 1/\lambda_n$ for all $(x,y)\in F_{n+1}\setminus F_n$. Then $\{f\in\Lipo(X)\colon \left\|f\right\|_\phi\leq 1\}\subset U$ and this proves the theorem.
\end{proof}



The second description of $\tau_{\gamma}$ in terms of seminorms is the ensuing.

\begin{theorem}\label{c9}
Let $X$ be a pointed metric space. The topology $\tau_{\gamma}$ is generated by the seminorms of the form
$$
p(f)=\sup_{n\in\N}\alpha_n\frac{\left|f(x_n)-f(y_n)\right|}{d(x_n,y_n)},\qquad f\in\Lipo(X),
$$
where $\{\alpha_n\}$ varies over all sequences in $\R^+$ tending to zero and $\{(x_n,y_n)\}$ runs over all sequences in $\widetilde{X}$.
\end{theorem}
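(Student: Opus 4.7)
The plan is to show that the family of seminorms $p$ of the stated form and the base of $\tau_\gamma$-neighborhoods of zero supplied by Lemma \ref{l10},
$$
U = \bigcap_{n=1}^\infty \left\{f \in \Lipo(X) : \Lip_{F_n}(f) \leq \lambda_n\right\}
$$
with $\{F_n\}$ finite subsets of $X$ and $\{\lambda_n\}\subset\R^+$ tending to $\infty$, generate the same topology. The two directions boil down to passing between a single pair $(x,y)\in\widetilde{X}$ carrying a scalar weight and the finitely many pairs extracted from a finite set $F_n$ carrying a common bound $\lambda_n$.

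For one direction, given $p$ built from $\{\alpha_n\}\subset\R^+$ with $\alpha_n\to 0$ and $\{(x_n,y_n)\}\subset\widetilde{X}$, I would set $F_n=\{0,x_n,y_n\}$ and $\lambda_n=1/\alpha_n$. The set $U$ attached to these data is a $\tau_\gamma$-neighborhood of zero by Lemma \ref{l10}, and for $f\in U$ one has
$$
\alpha_n \frac{\left|f(x_n)-f(y_n)\right|}{d(x_n,y_n)} \leq \alpha_n \Lip_{F_n}(f) \leq \alpha_n\lambda_n = 1
$$
for every $n$, whence $p(f)\leq 1$ and $p$ is $\tau_\gamma$-continuous. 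A slicker alternative is available via Theorem \ref{new tmf} (iv): the molecules $\alpha_n(\delta_{x_n}-\delta_{y_n})/d(x_n,y_n)$ form a norm-null sequence in $\F(X)$, their union with $\{0\}$ is a compact subset of $\F(X)$, and its polar in $\F(X)'_c$ corresponds, under the evaluation isomorphism, to precisely $\{f\in\Lipo(X):p(f)\leq 1\}$.

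For the converse, starting from $U$ as above, I would convert the countable collection of finitely-many-pair constraints into the unit ball of a single seminorm $p$. Assuming without loss of generality that $|F_n|\geq 2$ for every $n$, set $P_n := \{(x,y)\in F_n\times F_n : x\neq y\}$, a nonempty finite subset of $\widetilde{X}$. Enumerating $P_1,P_2,P_3,\ldots$ consecutively yields a sequence $\{(x_k,y_k)\}\subset\widetilde{X}$, and assigning the weight $\alpha_k := 1/\lambda_n$ to every enumerated pair originating from $P_n$ yields a sequence $\{\alpha_k\}$ in $\R^+$. Since each $P_n$ is finite and $1/\lambda_n\to 0$, the sequence $\{\alpha_k\}$ tends to zero, so the resulting $p$ has the required form, and the identity $\{f:p(f)\leq 1\}=U$ holds because both sides reduce to the same family of inequalities $|f(x)-f(y)|/d(x,y)\leq\lambda_n$ for all $n$ and all $(x,y)\in P_n$.

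The only place requiring a modicum of care is the bookkeeping of the enumeration so that $\alpha_k\to 0$, but this is automatic from $\lambda_n\to\infty$ combined with the finiteness of each $P_n$; the rest is direct verification.
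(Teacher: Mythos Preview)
Your proof is correct and follows essentially the same route as the paper: both arguments rest on Lemma~\ref{l10} and the observation that the Minkowski functional of a basic neighborhood $U=\bigcap_n\{f:\Lip_{F_n}(f)\le\lambda_n\}$ equals $\sup_n\lambda_n^{-1}\Lip_{F_n}(f)$, which after enumerating the pairs in each $F_n$ is exactly a seminorm of the stated form. The paper compresses both directions into this single remark, whereas you spell out each inclusion separately (and add the alternative via Theorem~\ref{new tmf}~(iv)); the content is the same.
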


\begin{proof}
Let $\mathcal{V}$ be the base of neighborhoods of $0$ in $(\Lipo(X),\tau_{\gamma})$ formed by the sets of the form
$$
U=\bigcap_{n=1}^{\infty}\left\{f\in\Lipo(X)\colon\Lip_{F_n}(f)\leq\lambda_n\right\}
$$
where $\{F_n\}$ and $\{\lambda_n\}$ are sequences as in Lemma \ref{l10}. If, for each $U\in\mathcal{V}$, $p_U$ is the Minkowski functional of $U$, then the family of seminorms $\left\{p_U\colon U\in\mathcal{V}\right\}$ generates the topology $\tau_{\gamma}$ on $\Lipo(X)$, but justly we have 
$$
p_U(f)=\sup_{n\in\N}\lambda_n^{-1}\Lip_{F_n}(f)
$$
for all $f\in\Lipo(X)$, and the result follows. 
\end{proof}



Let $E$ be a Banach space. The polar of $M\subset E$ and the prepolar of $N\subset E'$ are respectively
\begin{align*}
M^{\circ}&=\left\{f\in E'\colon\sup_{x\in M}\left|f(x)\right|\leq 1\right\},\\
N_{\circ}&=\left\{x\in E\colon\sup_{f\in N}\left|f(x)\right|\leq 1\right\}.
\end{align*}
$\overline{\Gamma}M$ stands for the closed, convex, balanced hull of $M$ in $E$. The next lemma will be needed later. 

\begin{lemma}\label{c0}
Let $X$ be a pointed metric space. For each compact set $L\subset\F(X)$, there exist sequences $\{\alpha_n\}\in c_0(\R^+)$ and $\{(x_n,y_n)\}\in \widetilde{X}^{\N}$ such that
$$
L\subset\overline{\Gamma}\left\{\alpha_n\frac{\delta_{x_n}-\delta_{y_n}}{d(x_n,y_n)}\colon n\in\N\right\}.
$$
\end{lemma}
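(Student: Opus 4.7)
The plan is to combine duality for the pair $(\F(X),\Lipo(X))$ with Lemma \ref{l10}. The pivotal observation is that for any finite set $F\subset X$ and $\lambda>0$,
$$
\Lip_F(f)\leq\lambda \iff \left|\langle f,\lambda^{-1}m_{x,y}\rangle\right|\leq 1\text{ for every }(x,y)\in\widetilde{F},
$$
where $m_{x,y}:=(\delta_x-\delta_y)/d(x,y)\in\F(X)$. Thus the base $\tau_\gamma$-neighborhoods supplied by Lemma \ref{l10} are already polars of explicit countable sets of scaled molecules.

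Given a compact set $L\subset\F(X)$, I would first argue that the polar $L^\circ\subset\Lipo(X)$ is a $\tau_\gamma$-neighborhood of zero. Indeed $\overline{\Gamma}(L)$ is convex, balanced and compact in the Banach space $\F(X)$, so $L^\circ=\overline{\Gamma}(L)^\circ$ is a $0$-neighborhood in $\F(X)'_c$, and Theorem \ref{new tmf} (iv) transports it to $(\Lipo(X),\tau_\gamma)$. Applying Lemma \ref{l10}, select finite sets $F_n\subset X$ and scalars $\lambda_n\in\R^+$ with $\lambda_n\to\infty$ such that
$$
\bigcap_{n=1}^{\infty}\left\{f\in\Lipo(X):\Lip_{F_n}(f)\leq\lambda_n\right\}\subset L^\circ.
$$
By the initial observation, the left-hand side equals $S^\circ$ for
$$
S:=\bigcup_{n=1}^{\infty}\left\{\lambda_n^{-1}m_{x,y}:(x,y)\in\widetilde{F_n}\right\}\subset\F(X).
$$
Hence $S^\circ\subset L^\circ$, and the bipolar theorem in the dual pair $(\F(X),\Lipo(X))$ yields $L\subset L^{\circ\circ}\subset S^{\circ\circ}=\overline{\Gamma}(S)$.

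It then remains to enumerate $S$ as $(\alpha_i m_{x_i,y_i})_{i\in\N}$ with $\alpha_i\in\R^+$ and $\alpha_i\to 0$. Each $F_n$ being finite, the $n$-th slice of $S$ contains finitely many elements, all of $\F(X)$-norm $\lambda_n^{-1}$; since $\lambda_n\to\infty$, only finitely many elements of $S$ can exceed any prescribed $\varepsilon>0$ in norm, so such an enumeration exists (if $S$ happens to be finite, pad with $\{2^{-i}m_{x^*,y^*}\}$ for a fixed pair $(x^*,y^*)\in\widetilde{X}$, which only enlarges $\overline{\Gamma}(S)$). The main point requiring care is the identification of $L^\circ$ as a $\tau_\gamma$-neighborhood of $0$, which relies on the compactness of $\overline{\Gamma}(L)$ in $\F(X)$ together with the topological isomorphism in Theorem \ref{new tmf} (iv); the remainder is a formal polar computation.
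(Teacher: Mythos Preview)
Your proof is correct and follows essentially the same route as the paper: identify $L^\circ$ as a $\tau_\gamma$-neighborhood of $0$ via Theorem \ref{new tmf} (iv), find a basic neighborhood inside it, rewrite that neighborhood as the polar of an explicit countable set of scaled molecules, and conclude by the bipolar theorem. The only cosmetic difference is that the paper invokes Theorem \ref{c9} (which already hands you the sequences $\{\alpha_n\}$ and $\{(x_n,y_n)\}$), whereas you go back to Lemma \ref{l10} and perform the enumeration by hand; since the proof of Theorem \ref{c9} is exactly that enumeration, the two arguments are the same in substance.
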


\begin{proof}
If $L$ is a compact subset of $\F(X)$, then the polar $L^0:=\{F\in\F(X)'\colon\sup_{\varphi\in L}|F(\varphi)|\leq 1\}$ is a neighborhood of $0$ in $\F(X)_c'$. Then, by Theorem \ref{new tmf} (iv), the set $\left\{f\in\Lipo(X)\colon T_f\in L^0\right\}$ is a neighborhood of $0$ in $(\Lipo(X),\tau_{\gamma})$. Hence, by Theorem \ref{c9}, there exist sequences $\{\alpha_n\}\in c_0(\R^+)$ and $\{(x_n,y_n)\}\in \widetilde{X}^{\N}$ such that 
$$
\left\{f\in\Lipo(X)\colon\sup_{n\in\N}\alpha_n\frac{\left|f(x_n)-f(y_n)\right|}{d(x_n,y_n)}\leq 1\right\}
\subset\left\{f\in\Lipo(X)\colon\sup_{\varphi\in L}\left|T_f(\varphi)\right|\leq 1\right\}.
$$ 
We have
\begin{align*}
\left\{\alpha_n\frac{\delta_{x_n}-\delta_{y_n}}{d(x_n,y_n)}\colon n\in\N\right\}^{0}
&=\left\{F\in\F(X)'\colon\sup_{n\in N}\left|F\left(\alpha_n\frac{\delta_{x_n}-\delta_{y_n}}{d(x_n,y_n)}\right)\right|\leq 1\right\}\\
&=\left\{T_f\colon f\in\Lipo(X),\, \sup_{n\in N}\alpha_n\frac{\left|f(x_n)-f(y_n)\right|}{d(x_n,y_n)}\leq 1\right\}\\
&\subset\left\{T_f\colon f\in\Lipo(X),\, \sup_{\varphi\in L}\left|T_f(\varphi)\right|\leq 1\right\}\\
&=\left\{F\in\F(X)'\colon \sup_{\varphi\in L}\left|F(\varphi)\right|\leq 1\right\}\\
&=L^{0},
\end{align*}
and then the bipolar theorem yields 
$$
L
\subset(L^{0})_0
\subset\left(\left\{\alpha_n\frac{\delta_{x_n}-\delta_{y_n}}{d(x_n,y_n)}\colon n\in\N\right\}^{0}\right)_0
=\overline{\Gamma}\left\{\alpha_n\frac{\delta_{x_n}-\delta_{y_n}}{d(x_n,y_n)}\colon n\in\N\right\}.
$$
\end{proof}

\section{The approximation property for $(\Lipo(X),\tau_\gamma)$}\label{4}

We devote this section to the study of the (AP) for the space $\Lipo(X)$ with the topology of bounded compact convergence. For it, we introduce the subsequent topology on $\Lipo(X,F)$.

\begin{definition}\label{defini}
Let $X$ be a pointed metric and let $F$ be a Banach space. The topology $\gamma\tau_{\gamma}$ on $\Lipo(X,F)$ is the locally convex topology generated by the seminorms of the form
$$
q(f)=\sup_{n\in\N}\alpha_n\frac{\left\|f(x_n)-f(y_n)\right\|}{d(x_n,y_n)},\qquad f\in\Lipo(X,F),
$$
where $\{\alpha_n\}$ ranges over the sequences in $\R^+$ tending to zero and $\{(x_n,y_n)\}$ over the sequences in $\widetilde{X}$.
\end{definition}

We study the relation between the topologies $\gamma\tau_{\gamma}$, $\tau_{\gamma}$ and $\tau_0$.

\begin{proposition}\label{p0}
Let $X$ be a pointed metric space and let $F$ be a Banach space. 
\begin{enumerate}
	\item $\tau_0$ is smaller than $\gamma\tau_{\gamma}$ on $\Lipo(X,F)$.
	\item $\tau_\gamma$ agrees with $\gamma\tau_{\gamma}$ on $\Lipo(X)$.
\end{enumerate}
\end{proposition}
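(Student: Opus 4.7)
The plan is to transfer Lemma \ref{c0} and the linearization $f \mapsto T_f$ of Theorem \ref{mainth} from the scalar- to the vector-valued setting.

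For part (ii), when $F = \K$ the norm $\|\cdot\|$ appearing in Definition \ref{defini} is just the modulus, so the seminorms generating $\gamma\tau_{\gamma}$ on $\Lipo(X)$ coincide verbatim with those generating $\tau_\gamma$ via Theorem \ref{c9}. Hence $\tau_\gamma$ and $\gamma\tau_\gamma$ agree on $\Lipo(X)$ by inspection.

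For part (i), the goal is to dominate each $\tau_0$-continuous seminorm on $\Lipo(X,F)$ by a $\gamma\tau_\gamma$-continuous one. Fix a compact set $K \subset X$ and the corresponding seminorm $\left|f\right|_K = \sup_{x \in K}\|f(x)\|$. Since $\delta_X \colon X \to \F(X)$ is an isometric embedding, $L := \delta_X(K)$ is a compact subset of $\F(X)$. Apply Lemma \ref{c0} to obtain sequences $\{\alpha_n\} \in c_0(\R^+)$ and $\{(x_n,y_n)\} \in \widetilde{X}^\N$ such that $L \subset \overline{\Gamma}\{\varphi_n : n \in \N\}$, where $\varphi_n := \alpha_n(\delta_{x_n}-\delta_{y_n})/d(x_n,y_n)$. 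For $f \in \Lipo(X,F)$, the linearization $T_f \in \L(\F(X);F)$ furnished by Theorem \ref{mainth}(iii)--(iv) satisfies $T_f(\delta_x) = f(x)$, and being continuous and linear it sends $\overline{\Gamma}\{\varphi_n\}$ into $\overline{\Gamma}\{T_f(\varphi_n)\} \subset F$. Because every element of a closed absolutely convex hull in a Banach space has norm no larger than the supremum of the norms of the generators, for each $x \in K$ I obtain
$$\|f(x)\| = \|T_f(\delta_x)\| \leq \sup_{n \in \N}\|T_f(\varphi_n)\| = \sup_{n \in \N}\alpha_n \frac{\|f(x_n)-f(y_n)\|}{d(x_n,y_n)} =: q(f),$$
with $q$ a seminorm of the form prescribed in Definition \ref{defini}. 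Taking the supremum over $x \in K$ yields $\left|f\right|_K \leq q(f)$, whence $\tau_0 \subset \gamma\tau_\gamma$ on $\Lipo(X,F)$.

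There is no serious obstacle: once the scalar-level Lemma \ref{c0} is in hand, the vector-valued bound arises by simply pushing the absolutely convex hull in $\F(X)$ through the bounded operator $T_f$ into $F$. The only small checks needed are that $T_f$ preserves closed absolutely convex hulls up to closure (immediate from continuity and linearity) and that such a hull lies inside the $R$-ball of $F$ whenever all its generators do.
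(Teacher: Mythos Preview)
Your proof is correct and follows essentially the same route as the paper: for (ii) you compare the seminorm families from Theorem \ref{c9} and Definition \ref{defini} exactly as the paper does, and for (i) you take $\delta_X(K)$, invoke Lemma \ref{c0}, and bound $\left|f\right|_K$ by the resulting $\gamma\tau_\gamma$-seminorm. The only difference is cosmetic: you spell out the passage through the linearization $T_f$ and the behaviour of closed absolutely convex hulls, whereas the paper simply writes ``It follows that'' for the same inequality.
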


\begin{proof}
To prove (i), let $K$ be a compact subset of $X$. Then $\delta_X(K)$ is a compact subset of $\F(X)$ and, by Lemma \ref{c0}, there are sequences $\{\alpha_n\}\in c_0(\R^+)$ and $\{(x_n,y_n)\}\in \widetilde{X}^{\N}$ such that
$$
\delta_X(K)\subset\overline{\Gamma}\left\{\alpha_n\frac{\delta_{x_n}-\delta_{y_n}}{d(x_n,y_n)}\colon n\in\N\right\}.
$$
It follows that   
$$
\left|f\right|_K
=\sup_{x\in K}\left\|f(x)\right\|
\leq\sup_{n\in\N}\alpha_n\frac{\left\|f(x_n)-f(y_n)\right\|}{d(x_n,y_n)}
=q(f),
$$
for all $f\in\Lipo(X,F)$, as desired. (ii) is deduced from Theorem \ref{c9} and Definition \ref{defini}.
\end{proof}

If $f\in\Lipo(X,F)$, we define the Lipschitz transpose of $f$ to the linear mapping $f^t\colon F'\to\Lipo(X)$ given by $f^t(\psi)=\psi\circ f$ for all $\psi\in F'$. Our next result shows that the Lipschitz transpose can be used to identify the space $(\Lipo(X,F),\gamma\tau_\gamma)$ with $(\Lipo(X),\tau_\gamma)\epsilon F$. By Section \ref{1}, notice that the seminorms
$$
\sup\left\{\alpha_n\frac{\left|T(\psi)(x_n)-T(\psi)(y_n)\right|}{d(x_n,y_n)}\colon n\in\N,\; \psi\in F',\;\left\|\psi\right\|\leq 1\right\},
\qquad T\in (\Lipo(X),\tau_\gamma)\epsilon F,
$$
where $\{\alpha_n\}$ and $\{(x_n,y_n)\}$ are sequences as above, determine the topology of $\L_\epsilon(F'_c;(\Lipo(X),\tau_\gamma))=(\Lipo(X),\tau_\gamma)\epsilon F$.

\begin{theorem}\label{t1-211}
Let $X$ be a pointed metric space and let $F$ be a Banach space. The mapping $f\mapsto f^t$ is a topological isomorphism from $(\Lipo(X,F),\gamma\tau_\gamma)$ onto $(\Lipo(X),\tau_\gamma)\epsilon F$. 
\end{theorem}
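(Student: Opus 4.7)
The plan is to verify that $f\mapsto f^t$ is a well-defined linear bijection whose defining seminorms on both sides correspond exactly, so that it will automatically be a topological isomorphism. I identify $(\Lipo(X),\tau_\gamma)\epsilon F$ with $\L_\epsilon(F'_c;(\Lipo(X),\tau_\gamma))$ as in Section \ref{1}, and I rely on the explicit seminorm description of $\tau_\gamma$ from Theorem \ref{c9}. First, for $f\in\Lipo(X,F)$ and $\mu\in F'$, the composition $f^t(\mu)=\mu\circ f$ lies in $\Lipo(X)$ with $\Lip(\mu\circ f)\leq\|\mu\|\Lip(f)$, and $f^t$ is linear. To see that $f^t$ goes continuously from $F'_c$ to $(\Lipo(X),\tau_\gamma)$, I fix a generating $\tau_\gamma$-seminorm $\beta(g)=\sup_n\alpha_n|g(x_n)-g(y_n)|/d(x_n,y_n)$ and note that the vectors $z_n:=\alpha_n(f(x_n)-f(y_n))/d(x_n,y_n)$ satisfy $\|z_n\|\leq\alpha_n\Lip(f)\to 0$, so $K=\{0\}\cup\{z_n:n\in\N\}$ is compact in $F$ and $\beta(f^t(\mu))=\sup_n|\mu(z_n)|\leq\sup_{z\in K}|\mu(z)|$, a compact-open seminorm on $F'$. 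Injectivity is immediate by Hahn--Banach.

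Next, for the topological match I fix an $\epsilon$-seminorm $\alpha\epsilon\beta$ with $\alpha=\|\cdot\|_F$ and $\beta$ as above, and compute
\begin{align*}
\alpha\epsilon\beta(f^t)
&=\sup\{|\nu(\mu\circ f)|:\|\mu\|\leq 1,\;\nu\in\F(X),\;|\nu|\leq\beta\}\\
&=\sup_{\|\mu\|\leq 1}\beta(\mu\circ f)=\sup_n\alpha_n\frac{\|f(x_n)-f(y_n)\|}{d(x_n,y_n)}=q(f),
\end{align*}
where the second equality rests on Hahn--Banach applied to $(\Lipo(X),\tau_\gamma)$ together with the identification $\F(X)=(\Lipo(X),\tau_\gamma)'_c$ of Theorem \ref{new tmf}(iii), which yields $\sup_{|\nu|\leq\beta}|\nu(g)|=\beta(g)$ for every $g\in\Lipo(X)$. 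Since every continuous seminorm on $F$ is a scalar multiple of $\|\cdot\|_F$ and every generating seminorm of $\tau_\gamma$ has the form of $\beta$, the generating seminorms on both sides correspond exactly, so $f\mapsto f^t$ is a topological embedding into $\L_\epsilon(F'_c;(\Lipo(X),\tau_\gamma))$.

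It remains to show surjectivity. Given $T\in\L_\epsilon(F'_c;(\Lipo(X),\tau_\gamma))$, for each $x\in X$ the linear functional $\mu\mapsto T(\mu)(x)=\delta_x(T(\mu))$ on $F'$ is $F'_c$-continuous, because $\delta_x\in\F(X)=(\Lipo(X),\tau_\gamma)'_c$; as $F$ is a Banach space, the dual of $F'_c$ is canonically $F$, and there is a unique $f(x)\in F$ with $\mu(f(x))=T(\mu)(x)$ for every $\mu$. Clearly $f(0)=0$. For the Lipschitz bound, the unit ball $B_{F'}$ is $F'_c$-bounded, so $T(B_{F'})$ is $\tau_\gamma$-bounded in $\Lipo(X)$ and, by Theorem \ref{tmf}(v), norm bounded with $L:=\sup_{\|\mu\|\leq 1}\Lip(T(\mu))<\infty$; then $\|f(x)-f(y)\|=\sup_{\|\mu\|\leq 1}|T(\mu)(x)-T(\mu)(y)|\leq L\,d(x,y)$, so $f\in\Lipo(X,F)$ and $f^t=T$ by construction. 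I expect the main obstacle to be this surjectivity step: passing from the $F'_c$-continuity of $\mu\mapsto T(\mu)(x)$ to an element of $F$, and more delicately extracting a uniform Lipschitz bound from $\tau_\gamma$-boundedness, a step that crucially uses the semi-Montel character of $(\Lipo(X),\tau_\gamma)$ established in Theorem \ref{tmf}.
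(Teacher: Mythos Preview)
Your proof is correct and follows essentially the same architecture as the paper's: the continuity of $f^t\colon F'_c\to(\Lipo(X),\tau_\gamma)$ via the compact set $K=\{0\}\cup\{\alpha_n(f(x_n)-f(y_n))/d(x_n,y_n)\}$, injectivity from separation by $F'$, and the Lipschitz bound on the inverse image via Theorem~\ref{tmf}(v) are all exactly as in the paper. Two points differ in presentation. First, for the topological part the paper proves two inequalities (continuity of $f\mapsto f^t$ and of its inverse separately), whereas you establish the single identity $\alpha\epsilon\beta(f^t)=q(f)$; your route is slightly more economical and uses the same ingredients. Second, for surjectivity the paper invokes Proposition~\ref{r0} to transpose $T$ into $\L_\epsilon(\F(X);F)$ and then sets $f=T^t\circ\delta_X$, while you define $f(x)$ pointwise using $(F'_c)'=F$; these are two sides of the same coin, since $\langle T^t(\delta_x),\mu\rangle=\langle\delta_x,T(\mu)\rangle=T(\mu)(x)$ is precisely your defining equation. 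The paper's transpose formulation buys a ready-made norm $\|T^t\|=\|T\|$ for the Lipschitz estimate, whereas you recover the same bound from $\tau_\gamma$-boundedness of $T(B_{F'})$; both work. Your closing remark attributing the Lipschitz bound to the semi-Montel property is slightly off target: what you actually use is only that $\tau_\gamma$-bounded equals norm bounded (Theorem~\ref{tmf}(v)), not compactness of bounded sets.
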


\begin{proof}
If $f\in\Lipo(X,F)$, the mapping $f^t\colon F'\to\Lipo(X)$ is continuous from $F'_c$ into $(\Lipo(X),\tau_\gamma)$. To prove this, let $p$ be a continuous seminorm on $(\Lipo(X),\tau_\gamma)$. By Theorem \ref{c9}, we can suppose that 
$$
p(g)=\sup_{n\in\N}\alpha_n\frac{\left|g(x_n)-g(y_n)\right|}{d(x_n,y_n)},\qquad g\in\Lipo(X),
$$
where $\{\alpha_n\}$ is a sequence in $\R^+$ tending to zero and $\{(x_n,y_n)\}$ is a sequence in $\widetilde{X}$. Since 
$$
\left\|\alpha_n\frac{f(x_n)-f(y_n)}{d(x_n,y_n)}\right\|\leq\alpha_n\Lip(f)
$$
for all $n\in\N$, the set 
$$
K=\left\{\alpha_n\frac{f(x_n)-f(y_n)}{d(x_n,y_n)}\right\}\cup\left\{0\right\}
$$
is compact in $F$. For each $\psi\in F'$, we have
$$
\alpha_n\frac{\left|f^t(\psi)(x_n)-f^t(\psi)(y_n)\right|}{d(x_n,y_n)}
=\left|\psi\left(\alpha_n\frac{f(x_n)-f(y_n)}{d(x_n,y_n)}\right)\right|
\leq\left|\psi\right|_K
$$
for all $n\in\N$, and consequently $p(f^t(\psi))\leq\left|\psi\right|_K$ as required.

Clearly, the mapping $f\mapsto f^t$ from $\Lipo(X,F)$ to $\L_\epsilon(F'_c;(\Lipo(X),\tau_\gamma))$ is linear and, since $F'$ separates the points of $F$, is injective. To prove that it is surjective, let $T\in\L_\epsilon(F'_c;(\Lipo(X),\tau_\gamma))$. Then its transpose $T^t$ is in $\L_\epsilon((\Lipo(X),\tau_\gamma)'_c;F)=\L_\epsilon(\F(X);F)$ by Proposition \ref{r0} and Theorem \ref{new tmf} (iii). Notice that $T\in\L(F';\Lipo(X))$ since the closed unit ball $B_{F'}$ of $F'$ is a compact subset of $(F',\tau_0)$, then $T(B_{F'})$ is a bounded subset of $(\Lipo(X),\tau_\gamma)$ and hence norm bounded by Theorem \ref{tmf} (v). Consider now the Dirac map $\delta_X\colon X\to\F(X)$. Then the mapping $f=T^t\circ\delta_X$ maps $X$ into $F$, vanishes at $0$ and is Lipschitz since  
$$
\left\|f(x)-f(y)\right\|
\leq\left\|T^t\right\|\left\|\delta_X(x)-\delta_X(y)\right\|
=\left\|T\right\|d(x,y)
$$
for all $x,y\in X$. For every $\psi\in F'$ and $x\in X$, we have
$$
f^t(\psi)(x)=\left\langle\psi,f(x)\right\rangle=\left\langle\psi,T^t\delta_X(x)\right\rangle=\left\langle T(\psi),\delta_X(x)\right\rangle=T(\psi)(x),
$$
and thus $f^t=T$. Hence the mapping $f\mapsto f^t$ is a linear bijection from $\Lipo(X,F)$ onto $(\Lipo(X),\tau_\gamma)\epsilon F$ with inverse given by $T\mapsto T^t\circ\delta_X$. 

It remains tho show that it is continuous with continuous inverse. For it, let $\{\alpha_n\}$ and $\{(x_n,y_n)\}$ be sequences as above. By Definition \ref{defini},  
$$
q(f)=\sup_{n\in\N}\alpha_n\frac{\left\|f(x_n)-f(y_n)\right\|}{d(x_n,y_n)},\qquad f\in\Lipo(X,F),
$$
is a continuous seminorm on $(\Lipo(X,F),\gamma\tau_\gamma)$. If $n\in\N$ and $\psi\in F'$ with $\left\|\psi\right\|\leq 1$, we have  
$$
\alpha_n\frac{\left|f^t(\psi)(x_n)-f^t(\psi)(y_n)\right|}{d(x_n,y_n)}
=\alpha_n\frac{\left|\psi(f(x_n))-\psi(f(y_n))\right|}{d(x_n,y_n)}
\leq\alpha_n\frac{\left\|f(x_n)-f(y_n)\right\|}{d(x_n,y_n)},
$$
therefore 
$$
\sup\left\{\alpha_n\frac{\left|f^t(\psi)(x_n)-f^t(\psi)(y_n)\right|}{d(x_n,y_n)}\colon n\in\N,\;\psi\in F',\;\left\|\psi\right\|\leq 1\right\}\leq q(f)
$$
and this proves that the mapping $f\mapsto f^t$ is continuous. To see that its inverse $T\mapsto T^t\circ\delta_X$ is continuous, 
let $q$ be a continuous seminorm on $(\Lipo(X,F),\gamma\tau_\gamma)$. By definition \ref{defini}, we can suppose that 
$$
q(f)=\sup_{n\in\N}\alpha_n\frac{\left\|f(x_n)-f(y_n)\right\|}{d(x_n,y_n)},\qquad f\in\Lipo(X,F),
$$
where $\{\alpha_n\}$ and $\{(x_n,y_n)\}$ are sequences as above. For each $n\in\N$, take $\psi_n\in B_{F'}$ such that 
$$
\left\|T^t\delta_X(x_n)-T^t\delta_X(y_n)\right\|
=\left|\left\langle\psi_n,T^t\delta_X(x_n)-T^t\delta_X(y_n)\right\rangle\right|
$$
and then we have 
$$
\alpha_n\frac{\left\|T^t\delta_X(x_n)-T^t\delta_X(y_n)\right\|}{d(x_n,y_n)}
=\alpha_n\frac{\left|\left\langle T\psi_n,\delta_X(x_n)-\delta_X(y_n)\right\rangle\right|}{d(x_n,y_n)}
=\alpha_n\frac{\left|T(\psi_n)(x_n)-T(\psi_0)(y_n)\right|}{d(x_n,y_n)}.
$$
It follows that   
$$
q(T^t\circ\delta_X)\leq
\sup\left\{\alpha_n\frac{\left|T(\psi)(x_n)-T(\psi)(y_n)\right|}{d(x_n,y_n)}\colon n\in\N,\;\psi\in F',\;\left\|\psi\right\|\leq 1\right\},
$$
and the proof is finished.
\end{proof}

Our next aim is to identify linearly the tensor product $\Lipo(X)\otimes F$ with the space of all Lipschitz finite-rank operators from $X$ to $F$. 
Let us recall that a mapping $f\in\Lipo(X,F)$ is called a Lipschitz finite-rank operator if the linear hull of $f(X)$ in $F$ has finite dimension in whose case this dimension is called the rank of $f$ and denoted by $\rank(f)$. We represent by $\LipoF(X,F)$ the vector space of all Lipschitz finite-rank operators from $X$ to $F$. This space can be generated linearly as follows.

\begin{lemma}\label{lem-finite}
Let $X$ be a pointed metric space and $F$ a Banach space.
\begin{enumerate}
\item If $g\in \Lipo(X)$ and $u\in F$, then the map $g\cdot u\colon X\to F$, given by $(g\cdot u)(x)=g(x)u$, belongs to $\LipoF(X,F)$ and $\Lip(g\cdot u)=\Lip(g)\left\|u\right\|$. Moreover, $\rank(g\cdot u)=1$ if $g\neq 0$ and $ u\neq 0$.
\item Every element $f\in\LipoF(X,F)$ has a representation in the form $f=\sum_{j=1}^m g_j\cdot  u_j$, where $m=\rank(f)$, $g_1,\ldots,g_m\in\Lipo(X)$ and $ u_1,\ldots, u_m\in F$.
\end{enumerate}
\end{lemma}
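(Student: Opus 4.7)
For part (i), the plan is a direct computation. First I would check that $g\cdot u$ vanishes at the base point: $(g\cdot u)(0)=g(0)u=0$ since $g\in\Lipo(X)$. For the Lipschitz constant, I would observe
\[
\|(g\cdot u)(x)-(g\cdot u)(y)\| = |g(x)-g(y)|\,\|u\|,
\]
and dividing by $d(x,y)$ and taking the supremum yields $\Lip(g\cdot u)=\Lip(g)\|u\|$. Since $(g\cdot u)(X)\subset\K u$, the image has linear hull of dimension at most one, and if both $g$ and $u$ are nonzero then the dimension is exactly one, giving $\rank(g\cdot u)=1$.

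For part (ii), the plan is to pick coordinates on the finite-dimensional range and use Hahn--Banach to obtain the coordinate functions as Lipschitz maps. Let $m=\rank(f)$ and choose a basis $u_1,\ldots,u_m$ of the linear hull of $f(X)$ in $F$. For each $x\in X$ there exist unique scalars $g_1(x),\ldots,g_m(x)$ such that $f(x)=\sum_{j=1}^m g_j(x)u_j$, so that $f=\sum_{j=1}^m g_j\cdot u_j$ pointwise. It remains to verify that each $g_j$ lies in $\Lipo(X)$. By Hahn--Banach, applied first on the finite-dimensional subspace $\mathrm{span}\{u_1,\ldots,u_m\}$ where the coordinate functionals are automatically continuous, and then extended to all of $F$, I can find $\psi_1,\ldots,\psi_m\in F'$ with $\psi_i(u_j)=\delta_{ij}$. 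Then $g_j(x)=\psi_j(f(x))$ for every $x\in X$, so $g_j=\psi_j\circ f$.

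Since $\psi_j$ is linear and bounded and $f\in\Lipo(X,F)$, the composition $\psi_j\circ f$ satisfies $|g_j(x)-g_j(y)|\leq\|\psi_j\|\,\|f(x)-f(y)\|\leq\|\psi_j\|\Lip(f)d(x,y)$, and $g_j(0)=\psi_j(f(0))=\psi_j(0)=0$, so $g_j\in\Lipo(X)$, completing the representation. I do not expect any genuine obstacle here; the only point requiring a brief justification is the existence of the biorthogonal functionals $\psi_j$, which is a standard consequence of Hahn--Banach on a finite-dimensional subspace.
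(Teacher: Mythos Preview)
Your proof is correct and follows essentially the same route as the paper: in (i) both arguments compute $\|(g\cdot u)(x)-(g\cdot u)(y)\|=|g(x)-g(y)|\,\|u\|$ and read off the Lipschitz constant, and in (ii) both pick a basis $u_1,\ldots,u_m$ of $\lin(f(X))$ and set $g_j$ equal to the $j$th coordinate functional composed with $f$. The only cosmetic difference is that the paper works with the coordinate functionals $y^j$ directly on the finite-dimensional range (where they are automatically bounded), whereas you invoke Hahn--Banach to extend them to $\psi_j\in F'$; this extension is harmless but unnecessary, since boundedness on $\lin(f(X))$ already suffices to conclude $g_j\in\Lipo(X)$.
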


\begin{proof}
(i) Clearly, $g\cdot u$ is well-defined. Let $x,y\in X$. For any $u\in F$, we obtain 
$$
\left\|(g\cdot u)(x)-(g\cdot u)(y)\right\|
=\left\|(g(x)-g(y))u\right\|
=\left|g(x)-g(y)\right|\left\|u\right\|
\leq\Lip(g)d(x,y)\left\| u\right\|,
$$
and so $g\cdot u\in\Lipo(X,F)$ and $\Lip(g\cdot u)\leq\Lip(g)\left\|u\right\|$. For the converse inequality, note that 
$$
\left|g(x)-g(y)\right|\left\| u\right\|
=\left\|(g\cdot u)(x)-(g\cdot u)(y)\right\|
\leq\Lip(g\cdot u)d(x,y)
$$ 
for all $x,y\in X$, and therefore $\Lip(g)\left\| u\right\|\leq\Lip(g\cdot u)$.

(ii) Suppose that the linear hull $\lin(f(X))$ of $f(X)$ in $F$ is $m$-dimensional and let $\{ u_1,\ldots, u_m\}$ be a base of $\lin(f(X))$. Then, for each $x\in X$, the element $f(x)\in f(X)$ is expressible in a unique form as $f(x)=\sum_{j=1}^m \lambda^{(x)}_j u_j$ with $\lambda^{(x)}_1,\ldots,\lambda^{(x)}_m\in\K$. For each $j\in\{1,\ldots,m\}$, define the linear map $y^j\colon\lin(f(X))\to\K$ by $y^j(f(x))=\lambda^{(x)}_j$ for all $x\in X$. Let $g_j=y ^j\circ f$. Clearly, $g_j\in\Lipo(X)$ and $f(x)=\sum_{j=1}^m \lambda^{(x)}_j u_j=\sum_{j=1}^m g_j(x) u_j$ for all $x\in X$. Hence $f=\sum_{j=1}^m g_j\cdot u_j$.
\end{proof}


\begin{proposition}\label{p-identification2}
Let $X$ be a pointed metric space and let $F$ be a Banach space. Then $\Lipo(X)\otimes F$ is linearly isomorphic to $\LipoF(X,F)$ via the linear bijection $K\colon \Lipo(X)\otimes F\to\LipoF(X,F)$ given by 
$$
K\left(\sum_{j=1}^m g_j\otimes u_j\right)=\sum_{j=1}^m g_j\cdot u_j.
$$
\end{proposition}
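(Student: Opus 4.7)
The plan is to verify well-definedness, linearity, surjectivity, and injectivity of $K$. By Lemma \ref{lem-finite}(i) the assignment $(g,u)\mapsto g\cdot u$ is a bilinear map from $\Lipo(X)\times F$ into $\LipoF(X,F)$, so the universal property of the algebraic tensor product furnishes a unique linear map $K\colon\Lipo(X)\otimes F\to\LipoF(X,F)$ acting on elementary tensors as in the statement; this handles well-definedness and linearity simultaneously. Surjectivity of $K$ is immediate from Lemma \ref{lem-finite}(ii), since any $f\in\LipoF(X,F)$ decomposes as $f=\sum_{j=1}^m g_j\cdot u_j$, which is exactly $K(\sum_{j=1}^m g_j\otimes u_j)$.

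The substantive step is injectivity. Given $\tau=\sum_{j=1}^m g_j\otimes u_j$ with $K(\tau)=0$, I would first rewrite $\tau=\sum_{k=1}^r h_k\otimes v_k$ where $v_1,\ldots,v_r$ are linearly independent in $F$. This is the usual linear-algebra reduction: whenever some $u_j$ is a linear combination of the remaining vectors, substitute it out; this preserves both the element of the tensor product and, simultaneously, the associated Lipschitz map, so the identity $\sum_{k=1}^r h_k\cdot v_k=0$ still holds. Using Hahn-Banach, I then choose $\psi_1,\ldots,\psi_r\in F'$ biorthogonal to $v_1,\ldots,v_r$ (that is, $\psi_i(v_k)$ equals $1$ if $i=k$ and $0$ otherwise). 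Evaluating $\sum_{k=1}^r h_k(x)v_k=0$ at an arbitrary $x\in X$ and applying $\psi_i$ yields $h_i(x)=0$ for every $x$, so each $h_i$ is identically zero and $\tau=0$.

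The only place requiring real care is the reduction to a linearly independent family $\{v_k\}$ and the verification that this reduction preserves both the tensor-product element and its image under $K$; once this bookkeeping is done, the Hahn-Banach biorthogonality step closes the injectivity argument without difficulty. Everything else is a direct appeal to Lemma \ref{lem-finite} together with the universal property of the tensor product.
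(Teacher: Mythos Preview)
Your proof is correct. The paper follows the same overall plan but handles well-definedness and injectivity a little differently. For well-definedness, rather than invoking the universal property, the paper checks directly that if $\sum_{j} g_j\otimes u_j=0$ then $\sum_{j} g_j\cdot u_j=0$, using the fact (from \cite{ryan}) that a zero tensor satisfies $\sum_j\varphi(g_j)u_j=0$ for every $\varphi\in\Lipo(X)'$, in particular for $\varphi=\delta_x$. For injectivity, the paper works from the \emph{first} factor: $K(\sum_j g_j\otimes u_j)=0$ means $\sum_j\delta_x(g_j)u_j=0$ for all $x\in X$, and since $\{\delta_x:x\in X\}$ is a separating subset of $\Lipo(X)'$, a general tensor-product criterion from \cite{ryan} forces $\sum_j g_j\otimes u_j=0$. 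Your route, via the universal property and a biorthogonal system in $F'$ obtained from Hahn--Banach, is more self-contained and uses functionals on the second factor instead; the paper's route is terser by outsourcing the tensor-algebra bookkeeping to \cite{ryan}.
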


\begin{proof}
Let $\sum_{j=1}^m g_j\otimes u_j\in \Lipo(X)\otimes F$. The mapping $K$ is well defined. Indeed, if $\sum_{j=1}^m g_j\otimes u_j=0$, then $\sum_{j=1}^m\varphi(g_j)u_j=0$ for every $\varphi\in\Lipo(X)'$ by \cite[Proposition 1.2]{ryan}. In particular, $\sum_{j=1}^m \delta_x(g_j)u_j=0$ for every $x\in X$ and thus $\sum_{j=1}^m g_j\cdot u_j=0$ as required. Clearly, $K$ is linear and, by Lemma \ref{lem-finite}, is onto. To see that it is injective, assume that $K(\sum_{j=1}^m g_j\otimes u_j)=0$. Then $\sum_{j=1}^m \delta_x(g_j)u_j=0$ for every $x\in X$, and since $\left\{\delta_x\colon x\in X\right\}$ is a separating subset of $\Lipo(X)'$, we infer that $\sum_{j=1}^m g_j\otimes u_j=0$ (see \cite[p. 3-4]{ryan}).
\end{proof}

From the preceding results we deduce the next result that characterizes the (AP) for the space $\Lipo(X)$ with the topology of bounded compact convergence. 
 
\begin{corollary}\label{c00000}
Let $X$ be a pointed metric space. The following are equivalent.
\begin{enumerate}
	\item $(\Lipo(X),\tau_\gamma)$ has the (AP).
	\item $\F(X)$ has the (AP).
	\item $\Lipo(X)\otimes F$ is dense in $(\Lipo(X),\tau_\gamma)\epsilon F$ for every Banach space $F$.
	\item $\LipoF(X,F)$ is dense in $(\Lipo(X,F),\gamma\tau_\gamma)$ for every Banach space $F$.
\end{enumerate}
\end{corollary}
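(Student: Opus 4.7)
My plan is to prove the four-fold equivalence by establishing the cycle (i) $\Leftrightarrow$ (iii) $\Leftrightarrow$ (iv) and, separately, (i) $\Leftrightarrow$ (ii), using the structural results already available.

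First, (i) $\Leftrightarrow$ (iii) is immediate from Theorem \ref{r00} applied to $E=(\Lipo(X),\tau_\gamma)$, since that theorem characterizes the (AP) for a locally convex space in terms of density of $E\otimes F$ in $E\epsilon F$ for every Banach $F$. Next, for (iii) $\Leftrightarrow$ (iv), I would use Theorem \ref{t1-211}, which gives a topological isomorphism $f\mapsto f^t$ from $(\Lipo(X,F),\gamma\tau_\gamma)$ onto $(\Lipo(X),\tau_\gamma)\epsilon F$. The key observation is that this isomorphism sends $\LipoF(X,F)$ onto $\Lipo(X)\otimes F$: if $f=\sum_{j=1}^m g_j\cdot u_j$ with $g_j\in\Lipo(X)$, $u_j\in F$, then for every $\psi\in F'$ one has $f^t(\psi)=\psi\circ f=\sum_{j=1}^m \psi(u_j)\,g_j$, which is precisely the element of $\Lipo(X)\otimes F\subset\L(F';\Lipo(X))$ corresponding to $\sum_{j=1}^m g_j\otimes u_j$ under the natural inclusion. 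Combining this with the linear bijection of Proposition \ref{p-identification2} yields that density of $\Lipo(X)\otimes F$ in $(\Lipo(X),\tau_\gamma)\epsilon F$ is equivalent to density of $\LipoF(X,F)$ in $(\Lipo(X,F),\gamma\tau_\gamma)$.

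Second, I would establish (i) $\Leftrightarrow$ (ii) by applying Proposition \ref{r000} twice. On the one hand, by Theorem \ref{new tmf} (iii), $(\Lipo(X),\tau_\gamma)'_c=\F(X)$ carries its natural Banach space topology; hence if (ii) holds, Proposition \ref{r000} with $E=(\Lipo(X),\tau_\gamma)$ yields (i). For the converse, Theorem \ref{new tmf} (iv) provides a topological isomorphism between $(\Lipo(X),\tau_\gamma)$ and $\F(X)'_c$; thus assuming (i), Proposition \ref{r000} now applied with $E=\F(X)$ yields (ii).

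The main subtlety to verify is the identification in (iii) $\Leftrightarrow$ (iv): one must check both that the transpose bijection carries the algebraic subspace $\LipoF(X,F)\subset\Lipo(X,F)$ exactly onto $\Lipo(X)\otimes F\subset (\Lipo(X),\tau_\gamma)\epsilon F$, and that this identification respects the topologies, so that closures correspond. The first point is handled by Proposition \ref{p-identification2} together with the direct computation of $f^t$ above, and the second is automatic from Theorem \ref{t1-211}. Everything else is a transparent combination of the tools already assembled in Sections \ref{1}--\ref{3} of the paper.
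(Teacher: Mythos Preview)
Your proof is correct and follows essentially the same route as the paper's own argument: (i) $\Leftrightarrow$ (iii) via Theorem \ref{r00}, (iii) $\Leftrightarrow$ (iv) via the topological isomorphism of Theorem \ref{t1-211} together with Proposition \ref{p-identification2}, and (i) $\Leftrightarrow$ (ii) by two applications of Proposition \ref{r000} using Theorem \ref{new tmf} (iii) and (iv). Your explicit verification that $f^t$ carries $\LipoF(X,F)$ onto $\Lipo(X)\otimes F$ is a welcome elaboration that the paper leaves implicit.
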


\begin{proof}
(i) $\Leftrightarrow$ (ii): Assume that $(\Lipo(X),\tau_\gamma)$ has the (AP). Since $(\Lipo(X),\tau_\gamma)=F(X)'_c$ by Theorem \ref{new tmf} (iv), then $\F(X)$ has the (AP) by Proposition \ref{r000}. Conversely, if $\F(X)$ has the (AP), we use that $\F(X)=(\Lipo(X),\tau_\gamma)'_c$ by Theorem \ref{new tmf} (iii) to obtain that $(\Lipo(X),\tau_\gamma)$ has the (AP) by Proposition \ref{r000}.

(i) $\Leftrightarrow$ (iii) is an application of Theorem \ref{r00}, and (iii) $\Leftrightarrow$ (iv) follows from Proposition \ref{p-identification2} and Theorem \ref{t1-211}. 
\end{proof}

\section{The dual space of $(\Lipo(X,F),\gamma\tau_{\gamma})$}\label{5}

The following theorem describes the dual of the space $(\Lipo(X,F),\gamma\tau_{\gamma})$. Recall that a linear functional $T$ on a topological vector space $Y$ is continuous if and only if there is a neighborhood $U$ of $0$ in $Y$ such that $T(U)$ is a bounded subset of $\K$. 
Hence $T\in(\Lipo(X,F),\gamma\tau_{\gamma})'$ if and only if there exist a constant $c>0$ and sequences $\{\alpha_n\}\in c_0(\R^+)$ and $\{(x_n,y_n)\}\in \widetilde{X}^\N$ such that 
$$
\left|T(f)\right|\leq c\sup_{n\in\N}\alpha_n\frac{\left\|f(x_n)-f(y_n)\right\|}{d(x_n,y_n)}
$$
for every $f\in\Lipo(X,F)$.

\begin{theorem}\label{iso22}
Let $X$ be a pointed metric and let $F$ be a Banach space. Then a linear functional $T$ on $\Lipo(X,F)$ is in the dual of $(\Lipo(X,F),\gamma\tau_{\gamma})$ if and only if there exist sequences $\{\phi_n\}$ in $F'$ and $\{(x_n,y_n)\}$ in $\widetilde{X}$ such that $\sum_{n=1}^\infty\left\|\phi_n\right\|<\infty$ and 
$$
T(f)=\sum_{n=1}^\infty\phi_n\left(\frac{f(x_n)-f(y_n)}{d(x_n,y_n)}\right)
$$
for all $f\in\Lipo(X,F)$. 
\end{theorem}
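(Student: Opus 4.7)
The plan is to handle the two implications separately, using the seminorm description of $\gamma\tau_\gamma$ from Definition \ref{defini} together with the classical identification $c_0(F)' = \ell_1(F')$ via $\bigl(\phi_n\bigr)\mapsto\bigl((u_n)\mapsto\sum_n\phi_n(u_n)\bigr)$.

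For the sufficiency, suppose $T$ has the stated representation. The key auxiliary fact is that given any nonnegative sequence $\{a_n\}$ with $\sum_n a_n<\infty$ there exists $\{\alpha_n\}\in c_0(\mathbb{R}^+)$ with $\sum_n a_n/\alpha_n<\infty$: take $R_n=\sum_{k\ge n}a_k$ and $\alpha_n=\sqrt{R_n}$ (when positive), so that $a_n/\alpha_n=(R_n-R_{n+1})/\sqrt{R_n}\le 2(\sqrt{R_n}-\sqrt{R_{n+1}})$, giving a telescoping bound. Applying this to $a_n=\|\phi_n\|$ and writing $C=\sum_n\|\phi_n\|/\alpha_n$, we obtain
$$
|T(f)|\le\sum_{n=1}^\infty\|\phi_n\|\,\frac{\|f(x_n)-f(y_n)\|}{d(x_n,y_n)}\le C\,\sup_{n\in\mathbb{N}}\alpha_n\frac{\|f(x_n)-f(y_n)\|}{d(x_n,y_n)}=C\,q(f),
$$
and the right-hand seminorm is $\gamma\tau_\gamma$-continuous by Definition \ref{defini}.

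For the necessity, start from the observation recalled before the statement: if $T\in(\Lipo(X,F),\gamma\tau_\gamma)'$ then there exist $c>0$, $\{\alpha_n\}\in c_0(\mathbb{R}^+)$ and $\{(x_n,y_n)\}\in\widetilde X^{\mathbb{N}}$ with $|T(f)|\le c\,q(f)$, where $q(f)=\sup_n\alpha_n\|f(x_n)-f(y_n)\|/d(x_n,y_n)$. Define the linear map
$$
\Phi\colon\Lipo(X,F)\longrightarrow c_0(F),\qquad \Phi(f)=\left(\alpha_n\,\frac{f(x_n)-f(y_n)}{d(x_n,y_n)}\right)_{n\in\mathbb{N}}.
$$
Since $\alpha_n\|f(x_n)-f(y_n)\|/d(x_n,y_n)\le\alpha_n\Lip(f)\to 0$, $\Phi$ indeed takes values in $c_0(F)$, and by construction $\|\Phi(f)\|_\infty=q(f)$. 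The rule $S_0(\Phi(f)):=T(f)$ is therefore well-defined and bounded on the subspace $\Phi(\Lipo(X,F))\subset c_0(F)$ with $\|S_0\|\le c$. By Hahn--Banach extend it to $S\in c_0(F)'$.

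The last step is to invoke $c_0(F)'\cong\ell_1(F')$: there is a sequence $\{\varphi_n\}$ in $F'$ with $\sum_n\|\varphi_n\|=\|S\|<\infty$ and $S((u_n))=\sum_n\varphi_n(u_n)$ for every $(u_n)\in c_0(F)$. Setting $\phi_n:=\alpha_n\varphi_n$ we get $\sum_n\|\phi_n\|\le(\sup_n\alpha_n)\sum_n\|\varphi_n\|<\infty$ and
$$
T(f)=S(\Phi(f))=\sum_{n=1}^\infty\varphi_n\!\left(\alpha_n\frac{f(x_n)-f(y_n)}{d(x_n,y_n)}\right)=\sum_{n=1}^\infty\phi_n\!\left(\frac{f(x_n)-f(y_n)}{d(x_n,y_n)}\right),
$$
which is the desired formula. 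The main technical point to get right is the two-way passage between a summable sequence $\{\|\phi_n\|\}$ and a null sequence $\{\alpha_n\}$ controlling the $\gamma\tau_\gamma$-seminorm; everything else is a linear-functional extension argument combined with the standard duality $c_0(F)'=\ell_1(F')$.
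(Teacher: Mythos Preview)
Your proof is correct and follows essentially the same route as the paper's: for necessity you embed $\Lipo(X,F)$ into $c_0(F)$ via $f\mapsto(\alpha_n(f(x_n)-f(y_n))/d(x_n,y_n))_n$, extend by Hahn--Banach, and invoke $c_0(F)'=\ell_1(F')$, exactly as in the original; for sufficiency you make explicit (via $\alpha_n=\sqrt{R_n}$) the construction of a null sequence $\{\alpha_n\}$ with $\sum\|\phi_n\|/\alpha_n<\infty$, whereas the paper simply asserts the existence of $\lambda_n\to\infty$ with $\sum\lambda_n\|\phi_n\|<\infty$ (the same fact with $\lambda_n=1/\alpha_n$).
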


\begin{proof}
Assume that $T$ is a linear functional on $\Lipo(X,F)$ of the preceding form. 
Since $\sum_{n=1}^\infty\left\|\phi_n\right\|<\infty$, we can take a sequence $\{\lambda_n\}$ in $\R^+$ tending to $\infty$ so that $\sum_{n=1}^\infty\lambda_n\left\|\phi_n\right\|=c<\infty$. Then we have
$$
\left|T(f)\right|
\leq\sum_{n=1}^\infty\left\|\phi_n\right\|\frac{\left\|f(x_n)-f(y_n)\right\|}{d(x_n,y_n)}
\leq c\sup_{n\in\N}\lambda_n^{-1}\frac{\left\|f(x_n)-f(y_n)\right\|}{d(x_n,y_n)}
$$
for all $f\in\Lipo(X,F)$. This proves that $T$ is continuous on $(\Lipo(X,F),\gamma\tau_{\gamma})$.

Conversely, if $T\in(\Lipo(X,F),\gamma\tau_{\gamma})'$, then there are sequences $\{\alpha_n\}\in c_0(\R^+)$ and $\{(x_n,y_n)\}\in\widetilde{X}^\N$ such that
$$
\left|T(f)\right|\leq\sup_{n\in\N}\alpha_n\frac{\left\|f(x_n)-f(y_n)\right\|}{d(x_n,y_n)}
$$
for every $f\in\Lipo(X,F)$. Consider the linear subspace 
$$
Z=\left\{\left\{\alpha_n\frac{f(x_n)-f(y_n)}{d(x_n,y_n)}\right\}\colon f\in\Lipo(X,F)\right\}
$$
of $c_0(F)$, and the functional $S$ on $Z$ given by 
$$
S\left(\left\{\alpha_n\frac{f(x_n)-f(y_n)}{d(x_n,y_n)}\right\}\right)=T(f)
$$
for every $f\in\Lipo(X,F)$. It follows easily that $S$ is well defined and linear. 
Since 
$$
\left|S\left(\left\{\alpha_n\frac{f(x_n)-f(y_n)}{d(x_n,y_n)}\right\}\right)\right|
=\left|T(f)\right|
\leq\sup_{n\in\N}\alpha_n\frac{\left\|f(x_n)-f(y_n)\right\|}{d(x_n,y_n)}
=\left\|\left\{\alpha_n\frac{f(x_n)-f(y_n)}{d(x_n,y_n)}\right\}\right\|_\infty
$$
for all $f\in\Lipo(X,F)$, $S$ is continuous on $Z$. By the Hahn-Banach theorem, $S$ has a norm-preserving continuous linear extension $\widehat{S}$ to all of $c_0(F)$. Since $c_0(F)'$ is just $\ell_1(F')$, there exists a sequence $\{\psi_n\}$ in $F'$ such that $\sum_{n=1}^\infty\left\|\psi_n\right\|=||\widehat{S}||$ and $\widehat{S}(\{u_n\})=\sum_{n=1}^\infty\psi_n(u_n)$ for any $\{u_n\}\in c_0(F)$. Taking $\phi_n=\alpha_n\psi_n$ for each $n\in\N$, we conclude that $\sum_{n=1}^\infty\left\|\phi_n\right\|\leq\left\|\{\alpha_n\}\right\|_\infty||\widehat{S}||<\infty$ and 
$$
T(f)
=\widehat{S}\left(\left\{\alpha_n\frac{f(x_n)-f(y_n)}{d(x_n,y_n)}\right\}\right)
=\sum_{n=1}^\infty\phi_n\left(\frac{f(x_n)-f(y_n)}{d(x_n,y_n)}\right)
$$
for all $f\in\Lipo(X,F)$.
\end{proof}

Since $\F(X)=(\Lipo(X),\tau_{\gamma})'_b$ by Theorem \ref{new tmf} (iii) and $\tau_{\gamma}=\gamma\tau_{\gamma}$ on $\Lipo(X)$ by Proposition \ref{p0} (ii), we next apply Theorem \ref{iso22} to describe the members of $\F(X)$. 

\begin{corollary}
Let $X$ be a pointed metric. Then $\F(X)$ consists of all functionals $T\in\Lipo(X)'$ of the form
$$
T(f)=\sum_{n=1}^\infty \alpha_n\frac{f(x_n)-f(y_n)}{d(x_n,y_n)},\qquad f\in\Lipo(X),
$$
where $\{\alpha_n\}\in\ell_1$ and $\{(x_n,y_n)\}\in \widetilde{X}^\N$.
\end{corollary}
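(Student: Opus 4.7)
The plan is to obtain this corollary as the scalar specialization of Theorem \ref{iso22}. First I would recall that Theorem \ref{new tmf}(iii) identifies $\F(X) = (\Lipo(X),\tau_\gamma)'_b$, so as a set of linear functionals $\F(X)$ coincides with the topological dual $(\Lipo(X),\tau_\gamma)'$. Proposition \ref{p0}(ii) shows that $\tau_\gamma$ and $\gamma\tau_\gamma$ agree on $\Lipo(X)$, hence the same vector space also equals $(\Lipo(X),\gamma\tau_\gamma)'$. This reduces the problem to characterizing the continuous linear functionals on $(\Lipo(X),\gamma\tau_\gamma)$, which is exactly what Theorem \ref{iso22} does (in the Banach-space-valued setting).

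Next I would invoke Theorem \ref{iso22} with $F = \K$. In that case each $\phi_n \in F' = \K'$ is just a scalar $\alpha_n \in \K$ with $\|\phi_n\| = |\alpha_n|$, acting on $z \in \K$ by multiplication $\phi_n(z) = \alpha_n z$. The summability condition $\sum_{n=1}^\infty \|\phi_n\| < \infty$ becomes $\{\alpha_n\} \in \ell_1$, and the representation furnished by Theorem \ref{iso22},
$$T(f) = \sum_{n=1}^\infty \phi_n\!\left(\frac{f(x_n)-f(y_n)}{d(x_n,y_n)}\right),$$
reduces to exactly the claimed formula. Conversely, any $T$ of this form is continuous on $(\Lipo(X),\gamma\tau_\gamma)$ by the converse half of Theorem \ref{iso22}, hence belongs to $\F(X)$ via the identification above.

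I do not expect any serious obstacle, since all the analytic content, in particular the Hahn--Banach extension into $c_0(F)'=\ell_1(F')$, has been absorbed into Theorem \ref{iso22}. The only step requiring mild care is bookkeeping: verifying that the chain of equalities $\F(X) = (\Lipo(X),\tau_\gamma)'_b$ (as a vector space of functionals) together with $\tau_\gamma = \gamma\tau_\gamma$ on $\Lipo(X)$ yields $\F(X) = (\Lipo(X),\gamma\tau_\gamma)'$, which is what makes the specialization $F = \K$ applicable. Everything else is a mechanical rewriting of the scalar-valued case of the preceding theorem.
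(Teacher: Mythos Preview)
Your proposal is correct and follows exactly the route the paper takes: the paper itself states just before the corollary that, since $\F(X)=(\Lipo(X),\tau_{\gamma})'_b$ by Theorem \ref{new tmf}(iii) and $\tau_{\gamma}=\gamma\tau_{\gamma}$ on $\Lipo(X)$ by Proposition \ref{p0}(ii), one applies Theorem \ref{iso22} with $F=\K$ to describe the members of $\F(X)$. Your specialization $\phi_n\leftrightarrow\alpha_n$ and the resulting $\ell_1$ condition are precisely this application.
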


\end{document}